\documentclass[12pt,reqno]{amsart}

\usepackage[utf8]{inputenc}
\usepackage[T1]{fontenc}
\usepackage{lmodern}
\usepackage{stmaryrd}

\usepackage{amsmath,amssymb,amsfonts}
\usepackage{mathtools}
\mathtoolsset{showonlyrefs}
\usepackage{mathrsfs}

\usepackage{enumitem}
\setlist{itemsep=2pt, topsep=4pt}

\usepackage[margin=1in]{geometry}
\usepackage{microtype}

\usepackage[dvipsnames]{xcolor}

\definecolor{linkred}{rgb}{0.78,0.03,0.08}
\definecolor{linkblue}{rgb}{0.0,0.2,0.67}
\definecolor{linkpurple}{rgb}{0.55,0.0,0.55}
\usepackage[
  colorlinks=true,
  linkcolor=linkred,
  citecolor=linkblue,
  urlcolor=MidnightBlue,
  hypertexnames=false
]{hyperref}
\theoremstyle{plain}
\newtheorem{thm}{Theorem}[section]
\newtheorem{lem}[thm]{Lemma}
\newtheorem{prop}[thm]{Proposition}

\theoremstyle{definition}
\newtheorem{defi}[thm]{Definition}

\theoremstyle{remark}
\newtheorem{rem}[thm]{Remark}

\numberwithin{equation}{section}


\newcommand{\R}{\mathbb R}

\renewcommand{\L}{\ell}
\newcommand{\sL}{\mathscr{L}}
\newcommand{\h}{\mathsf h}
\newcommand{\bw}{\mathbf{w}}

\newcommand{\ci}{\mathcal{I}}
\newcommand{\cj}{\mathcal{J}}
\newcommand{\ts}{\mathsf{s}}
\newcommand{\bm}{\mathbf{m}}

\usepackage{tikz}

\definecolor{orcidgreen}{HTML}{A6CE39}



\title[Mixed local/nonlocal]{Fujita Phenomenon for a Mixed Local--Nonlocal Hardy--H\'enon Equation with Regularly Varying Time Weights}

\author[R. Ben Belgacem and M. Majdoub]{Rihab Ben Belgacem and Mohamed Majdoub}

\address[R. Ben Belgacem]{Universit\'e de Tunis El Manar, Facult\'e des Sciences de Tunis, D\'epartement de math\'ematiques, Laboratoire \'equations aux d\'eriv\'ees partielles (LR03ES04), 2092 Tunis, Tunisie.}
\email{\tt{rihabbenbelgacem662@gmail.com}}

\address[M. Majdoub]{Department of Mathematics, College of Science, Imam Abdulrahman Bin Faisal University, P. O. Box 1982, Dammam, Saudi Arabia.}
\address[M. Majdoub]{Basic and Applied Scientific Research Center, Imam Abdulrahman Bin Faisal University, P.O. Box 1982, 31441, Dammam, Saudi Arabia.}
\email{\tt{mmajdoub@iau.edu.sa}}
\email{\tt{med.majdoub@gmail.com}}
\email{\tt{mohamed.majdoub@fst.rnu.tn}}

\subjclass[2020]{35A01, 35B33, 35B44, 35K58, 35R11, 47D06, 26A12}
\keywords{Mixed local-nonlocal operator, fractional Laplacian, semilinear parabolic equation, Hardy-H\'enon nonlinearity, forcing term, finite-time blow-up, global existence, mild solution, critical Fujita exponent, regularly varying functions, Karamata theory.}

\begin{document}
\allowdisplaybreaks

\begin{abstract}
We investigate the Cauchy problem for a semilinear parabolic equation driven by a mixed local--nonlocal diffusion operator of the form
\[
\partial_t u - (\Delta - (-\Delta)^\ts)u = \h(t)|x|^{-b}|u|^p + t^\varrho \bw(x), 
\qquad (x,t)\in \mathbb{R}^N\times (0,\infty),
\]
where $\ts\in (0,1)$, $p>1$, $b\geq 0$, and $\varrho>-1$. The function $\h(t)$ is assumed to belong to the generalized class of regularly varying functions, while $\bw$ is a prescribed spatial source. We first revisit the unforced case and establish sharp blow-up and global existence criteria in terms of the critical Fujita exponent, thereby extending earlier results to the wider class of time-dependent coefficients. For the forced problem, we derive nonexistence of global weak solutions under suitable growth conditions on $\h$ and integrability assumptions on $\bw$. Furthermore, we provide sufficient smallness conditions on the initial data and the forcing term ensuring global-in-time mild solutions. Our analysis combines semigroup estimates for the mixed operator, test function methods, and asymptotic properties of regularly varying functions. To our knowledge, this is the first study addressing blow-up phenomena for nonlinear diffusion equations with such a class of time-dependent coefficients.
\end{abstract}

\date{\today}
\maketitle

\section{Introduction and main results}
\label{S1}
In this paper, we study the blow-up phenomenon for solutions to the following mixed local--nonlocal diffusion equation:
\begin{equation} 
\begin{cases}
\partial_t u(x,t) - \sL u(x,t) = \h(t)\,|x|^{-b} |u(x,t)|^p + t^{\varrho}\,\bw(x), \\
u(x,0) = u_0(x), 
\end{cases}
\label{main}
\end{equation}
where $(x,t) \in \mathbb{R}^N \times (0, \infty)$, the parameters satisfy $p>1$, $b \geq 0$, and $\varrho > -1$. The time-dependent coefficient $\h \colon (0, \infty) \to (0, \infty)$ is a given continuous function and $\bw \colon \mathbb{R}^N \to \mathbb{R}$ is a prescribed spatial function. The diffusion operator $\sL$ is defined by
$$
\sL := \Delta - (-\Delta)^{\ts}, \quad \text{with } \ts \in (0,1),
$$
and models a combination of classical local diffusion (via the Laplace operator $\Delta$) and nonlocal diffusion (via the fractional Laplace operator $(-\Delta)^\ts$).

The mixed local-nonlocal operator $\sL$ combines the classical Laplacian $ \Delta $, a local second-order differential operator, with the fractional Laplacian $ (-\Delta)^s $, which is nonlocal. The classical Laplacian models standard diffusion processes such as Brownian motion, while the fractional Laplacian accounts for anomalous diffusion characterized by long-range jumps, as in Lévy flights \cite{Dipierro1, Vald}. The operator $ \sL $ thus describes a competition between local and non-local diffusion, making it suitable for modeling phenomena where both short- and long-range interactions coexist.

It is worth noticing that the fractional Laplacian  arises naturally in the theory of stochastic processes, particularly in connection with symmetric $\alpha$-stable Lévy processes. These processes, which generalize Brownian motion by allowing for jumps, are characterized by independent and stationary increments, and their paths exhibit discontinuities.

Furthermore, symmetric $\alpha$-stable Lévy processes can be constructed by subordinating a Brownian motion with an increasing Lévy process, known as a subordinator. This probabilistic perspective leads to natural connections between nonlocal evolution equations and stochastic processes. For further details and foundational results, we refer to the works of Applebaum \cite{Applebaum}, Bertoin \cite{Bertoin}, and Bogdan et al. \cite{Bogdan}, among others.

Nonlocal models have gained significant attention as robust alternatives to classical partial differential equations (PDEs), especially when local formulations fail to accurately describe phenomena involving multiscale interactions or anomalous transport. A wide range of physical and engineering systems exhibit intrinsic nonlocality and hierarchical structures that render classical PDE-based models inadequate. Such features are prevalent in various applications, including continuum mechanics \cite{Mec1, Mec2, Mec3},  phase transitions \cite{Ph1, Ph2, Ph3}, corrosion processes \cite{Cor}, turbulent flows \cite{Turb1, Turb2, Turb3}, and geophysical modeling \cite{Geo1, Geo2, Geo3, Geo4, Geo5}. These settings often require mathematical frameworks that incorporate long-range interactions or fractional-order operators to capture the underlying dynamics more faithfully.

From a mathematical point of view, the operator $ \sL $ is of significant interest due to the interplay between local non-local dynamics. It presents new challenges in analysis, including the study of regularity, spectral properties, and maximum principles. Equations involving $ \sL $, such as
\begin{equation}
    \label{L-f-u}
    \partial_t u = \mathscr{L} u + f(u),
\end{equation}
serve as a framework for reaction-diffusion models with mixed diffusion. 

The investigation of blow-up phenomena for equation~\eqref{L-f-u} dates back to ~\cite{Sugitani}, which considered the case of the pure fractional Laplacian $\sL = -(-\Delta)^\ts$ along with general nonlinearities. Since then, various aspects of the purely fractional setting, featuring different types of nonlinearities and alternative analytical techniques, have been further developed in~\cite{Fino, Kirane1, Kirane2}. We also refer to~\cite{Y, X} for further results concerning the existence, nonexistence, and qualitative behavior of solutions to related problems.

In recent work, Biagi, Punzo, and Vecchi~\cite{biagi2024}, and subsequently Del Pezzo and Ferreira~\cite{Fuj2025}, investigated Fujita-type phenomena for equation~\eqref{L-f-u} involving a power-type nonlinearity  $ f(u) = u^p $. They identified the critical Fujita exponent as $ 1 + \frac{2\ts}{N} $, which marks the threshold between global existence and finite-time blow-up. Their analysis, which relies on the classical Kaplan eigenfunction method~\cite{Kaplan}, reveals an intriguing result: the critical exponent $ 1 + \frac{2\ts}{N} $ coincides exactly with that of the purely fractional Laplacian case. This indicates that the presence of a local diffusion term does not alter the fundamental blow-up behavior governed by the fractional component. In essence, the mixed local-nonlocal operator preserves the same criticality as the fractional Laplacian in determining the long-time dynamics of solutions.

Regarding the existence of global solutions, the strategy in~\cite{biagi2024} relies on an approximation scheme to build suitable solutions step by step. On the other hand, the approach in~\cite{Fuj2025} is based on the explicit construction of a global \emph{supersolution}, which serves as an upper barrier to control the behavior of solutions over time.

Recently, the problem \eqref{main} in the case $ b = 0 $ and without a forcing term was studied in~\cite{Migu-2025}, where the authors considered more general nonlinearities beyond the standard power-type case. In particular, they explored the initial value problem
\begin{equation}
\begin{cases} 
\label{Main-eq-bis}   
\partial_t u - \mathscr{L} u = \h(t)\, u^p & \text{in } \mathbb{R}^N \times (0,\infty), \\ 
u(x,0) = u_0(x) \geq 0 & \text{in } \mathbb{R}^N,
\end{cases}
\end{equation}
where $\h \in C([0,\infty)) $ is a nonnegative function. 
The main result presented in~\cite[Theorem 6]{Migu-2025}, when adapted to the framework of equation~\eqref{Main-eq-bis}, can be summarized as follows:
\begin{enumerate}
    \item[$\varoast$] Suppose $v_0 \in L^1 \cap L^\infty$ is nonnegative. If the following integral condition holds:
    \begin{equation}
        \label{Glob-cond}
        \int\limits_0^\infty \h(\tau) \|e^{\tau \sL} v_0\|_\infty^{p-1} \, d\tau < 1,
    \end{equation}
    then there exists a constant $\delta > 0$ such that the solution to~\eqref{Main-eq-bis} with initial data $u_0 = \delta\,v_0$ exists globally in time.
    \item[$\varoast$] On the other hand, if $u_0 \in L^1 \cap L^\infty$ is nontrivial and nonnegative, and there exists some $t_0 > 0$ such that
    \begin{equation}
        \label{Blow-cond}
        (p-1)\, \|e^{t_0 \sL} u_0\|_\infty^{p-1} \int\limits_0^{t_0} \h(\tau)\, d\tau \geq  1,
    \end{equation}
    then the corresponding mild solution to~\eqref{Main-eq-bis} blows up in finite time.
\end{enumerate}

As an application of~\cite[Theorem 6]{Migu-2025}, the authors determine the Fujita exponent for equation~\eqref{Main-eq-bis} under the assumption that the function $\h$ satisfies the asymptotic growth condition
\begin{equation}
    \label{Fujita-h-weak}
    C_1 t^\gamma \leq \h(t) \leq C_2 t^\gamma, \quad \text{for } t \gg 1,
\end{equation}
for some $\gamma > 0$ and constants $C_1, C_2 > 0$. Under this assumption, they show that the critical Fujita exponent is given by
\begin{equation}
    \label{Fujita-rho-weak}
    p_F = 1 + \frac{2\ts(\gamma + 1)}{N}.
\end{equation}
Interestingly, the growth condition~\eqref{Fujita-h-weak} can be interpreted as saying that $\h \in \mathcal{M}(\gamma)$, where the class $\mathcal{M}(\gamma)$ is given in the Definition~\ref{M-rho-def} below. As explained in Appendix~\ref{appendix1}, this class serves as a natural extension of the classical class of regularly varying functions. 

Before presenting our main results concerning equation~\eqref{main}, we first clarify the notions of weak and mild solutions.
\begin{defi}\label{defn:weak-solution}
We say that a function $u(x,t)$ is a \emph{global weak solution} of~\eqref{main} if it satisfies the following conditions:
$$
u_0 \in L^1_{\mathrm{loc}}(\mathbb{R}^N), \quad \h(t)\,|x|^{-b} |u|^p \in L^1_{\mathrm{loc}}(\mathbb{R}^N\times (0,\infty)),
$$
and for every test function $\psi \in C^\infty_0( \mathbb{R}^N\times (0,\infty))$, the identity
\begin{equation}
\label{W-S}
\begin{split}
\int\limits_0^\infty \int\limits_{\mathbb{R}^N} u(-\partial_t \psi - \sL \psi)\,dx\,dt 
&= \int\limits_{\mathbb{R}^N} u_0(x)\, \psi(x,0)\,dx 
+ \int\limits_0^\infty \int\limits_{\mathbb{R}^N} \h(t)\,|x|^{-b} |u|^p \psi\,dx\,dt \\
&\quad + \int\limits_0^\infty \int\limits_{\mathbb{R}^N} t^\varrho\, \mathbf{w}(x)\, \psi\, dx\,dt
\end{split}
\end{equation}
holds.
\end{defi}

Alternatively, equation~\eqref{main} can be expressed in its Duhamel form as  
\begin{equation}\label{eq:Duhamel}
u(x,t) \;=\; e^{t\sL} u_0 
+ \int_0^t \h(s)\, e^{(t-s)\sL}\!\left(|\cdot|^{-b}\,|u(s)|^p\right)ds 
+ \int_0^t s^\varrho \, e^{(t-s)\sL}\mathbf{w}(\cdot)\,ds,
\end{equation}
where $e^{t\sL}$ denotes the semigroup generated by the mixed local–nonlocal operator $\sL$ (see Section~\ref{S2} for details).  
A function $u$ that satisfies~\eqref{eq:Duhamel} is referred to as a \emph{mild solution} of~\eqref{main}.

Our main result on equation~\eqref{Main-eq-bis} extends the scope of~\cite[Corollary 7]{Migu-2025} by allowing the function $\h$ to belong to the broader class $\mathcal{M}(\gamma)$, for any $\gamma > -1$.
\begin{thm}
\label{Fuj-improve}
Assume that $\h \in \mathcal{M}(\gamma)$ for some $\gamma > -1$. Then the following holds:
\begin{enumerate}[label=(\roman*)]
    \item If $p < 1 + \dfrac{2\ts(\gamma + 1)}{N}$, then every nonnegative solution of~\eqref{Main-eq-bis} blows up in finite time.
    \item If $p > 1 + \dfrac{2\ts(\gamma + 1)}{N}$, then equation~\eqref{Main-eq-bis} admits a global-in-time solution for sufficiently small initial data.
\end{enumerate}
\end{thm}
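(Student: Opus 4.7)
The plan is to deduce Theorem \ref{Fuj-improve} directly from the general blow-up and global-existence criteria \eqref{Glob-cond}--\eqref{Blow-cond} of \cite{Migu-2025}, after combining sharp large-time $L^1$--$L^\infty$ decay for the semigroup $e^{t\sL}$ with Karamata/Potter-type asymptotics for functions in $\mathcal{M}(\gamma)$ (Appendix~\ref{appendix1}). The key reduction is that decay bounds on $e^{t\sL}$ translate \eqref{Glob-cond}--\eqref{Blow-cond} into one-variable integral inequalities on $\h$ which the $\mathcal{M}(\gamma)$ theory resolves by comparison with power weights.

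First I would record the required semigroup estimates. Since the Fourier symbol of $\sL$ is $-(|\xi|^2+|\xi|^{2\ts})$ with $\ts\in(0,1)$, the low-frequency term $|\xi|^{2\ts}$ governs the large-time behavior of the heat kernel $p_t$; the rescaling $\xi=t^{-1/(2\ts)}\eta$ yields $p_t(0)\sim c\,t^{-N/(2\ts)}$ as $t\to\infty$. Consequently, for every nontrivial nonnegative $v_0\in L^1\cap L^\infty$ one has
\begin{equation*}
c\,t^{-N/(2\ts)}\|v_0\|_1 \;\leq\; \|e^{t\sL}v_0\|_\infty \;\leq\; C\,t^{-N/(2\ts)}\|v_0\|_1, \qquad t\gg 1,
\end{equation*}
together with the uniform bound $\|e^{t\sL}v_0\|_\infty\leq\|v_0\|_\infty$. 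The lower estimate follows by truncating $v_0$ to a compact set $K$ carrying at least half its $L^1$ mass and noting that $p_t(x-y)\geq c\,t^{-N/(2\ts)}$ uniformly for $y\in K$ as soon as $t^{1/(2\ts)}$ exceeds $\mathrm{diam}(K)$.

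For part (i), pick any nontrivial nonnegative $u_0\in L^1\cap L^\infty$. Karamata's integral theorem, extended to $\mathcal{M}(\gamma)$ with $\gamma>-1$, gives $\int_0^{t_0}\h(\tau)\,d\tau\sim t_0\,\h(t_0)/(\gamma+1)$ as $t_0\to\infty$. Combined with the lower semigroup bound, the left-hand side of \eqref{Blow-cond} is minorized by $C\,\h(t_0)\,t_0^{\,1-N(p-1)/(2\ts)}$, a quantity regularly varying of index $\gamma+1-N(p-1)/(2\ts)$. Under the assumption $p<1+2\ts(\gamma+1)/N$ this index is strictly positive, so the expression diverges as $t_0\to\infty$ (slowly varying factors cannot alter divergence), and \eqref{Blow-cond} is eventually satisfied, forcing finite-time blow-up. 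For part (ii), fix a nonnegative, compactly supported, nontrivial smooth bump $v_0$ and split the integral in \eqref{Glob-cond} at $t=1$:
\begin{equation*}
\int_0^\infty\h(\tau)\|e^{\tau\sL}v_0\|_\infty^{p-1}\,d\tau \;\leq\; \|v_0\|_\infty^{p-1}\!\int_0^1\!\h(\tau)\,d\tau + C\|v_0\|_1^{p-1}\!\int_1^\infty\!\h(\tau)\,\tau^{-N(p-1)/(2\ts)}\,d\tau.
\end{equation*}
The first integral is finite because $\gamma>-1$; for the second, Potter-type upper bounds for $\mathcal{M}(\gamma)$ furnish $\h(\tau)\leq\tau^{\gamma+\varepsilon}$ for any small $\varepsilon>0$ at large $\tau$, and $p>1+2\ts(\gamma+1)/N$ allows $\varepsilon$ to be chosen so that $\gamma+\varepsilon-N(p-1)/(2\ts)<-1$, ensuring convergence. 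Taking $u_0=\delta v_0$ with $\delta$ small enforces \eqref{Glob-cond} and supplies a global mild solution.

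The main technical obstacle is the Karamata/Potter asymptotic analysis within the generalized class $\mathcal{M}(\gamma)$: the classical Bingham--Goldie--Teugels theory for regularly varying functions must be transferred to this wider class, and this is precisely the content of Appendix~\ref{appendix1}. Once those tools are available, the semigroup reductions above become essentially mechanical, and the critical exponent $p=1+2\ts(\gamma+1)/N$ arises as the unique balance point between the large-time decay rate $t^{-N/(2\ts)}$ of $e^{t\sL}$ and the effective growth rate $t^{\gamma}$ of $\h$.
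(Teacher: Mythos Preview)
Your overall strategy coincides with the paper's: reduce to the criteria \eqref{Blow-cond}/\eqref{Glob-cond} from \cite{Migu-2025} via the two-sided $L^\infty$ decay of $e^{t\sL}$ (Lemma~\ref{Bounds-L}), and then handle the resulting one-variable integrals using the $\mathcal{M}(\gamma)$ asymptotics of Appendix~\ref{appendix1}. Part~(ii) is carried out correctly; the Potter-type upper bound $\h(\tau)\le\tau^{\gamma+\varepsilon}$ for large $\tau$ is exactly what the definition \eqref{M-rho} of $\mathcal{M}(\gamma)$ gives, and the splitting argument closes. (A small slip: $\int_0^1\h<\infty$ follows from the standing hypothesis $\h\in C([0,\infty))$, not from $\gamma>-1$, which only governs behavior at infinity.)

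Part~(i), however, has a genuine gap. You assert that ``Karamata's integral theorem, extended to $\mathcal{M}(\gamma)$'' yields $\int_0^{t_0}\h(\tau)\,d\tau\sim t_0\h(t_0)/(\gamma+1)$. This asymptotic is \emph{false} in $\mathcal{M}(\gamma)$: for $\h(t)=t^\gamma(2+\sin t)$ one has $\int_0^{t_0}\h\sim \tfrac{2}{\gamma+1}t_0^{\gamma+1}$ while $t_0\h(t_0)/(\gamma+1)=t_0^{\gamma+1}(2+\sin t_0)/(\gamma+1)$, and the ratio oscillates between $2/3$ and $2$. More dramatically, for $\h(t)=t^\gamma\exp(\sqrt{\log t}\,\sin\sqrt{\log t})\in\mathcal{M}(\gamma)$ the ratio is unbounded in both directions. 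Consequently your minorization ``LHS of \eqref{Blow-cond} $\gtrsim C\h(t_0)t_0^{1-N(p-1)/(2\ts)}$'' is unjustified, and the subsequent claim that this quantity is ``regularly varying'' is also incorrect (it lies only in the corresponding $\mathcal{M}$ class). Appendix~\ref{appendix1} states Karamata's theorem (Theorem~\ref{RVF-asymp}) only for $\mathcal{RV}_\rho$, not for $\mathcal{M}(\rho)$, precisely because no such extension exists.

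The fix is easy and you already have the right tool. Either argue symmetrically to your part~(ii) via the Potter-type \emph{lower} bound $\h(\tau)\ge\tau^{\gamma-\varepsilon}$ for large $\tau$ (immediate from \eqref{M-rho}), giving $\int_0^{t_0}\h\gtrsim t_0^{\gamma+1-\varepsilon}$ and hence divergence in \eqref{Blow-cond} once $\varepsilon$ is chosen small; or follow the paper, which invokes Cadena's characterization (Theorem~\ref{Charact-M}) to obtain a slowly varying $\ell$ with $\h(t)\gtrsim t^\gamma\ell(t)$ for large $t$, then integrates over $[t_0/2,t_0]$ and applies the classical Karamata/UCT machinery together with Lemma~\ref{Asympt-L}.
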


\begin{rem}
    \rm
    ~\begin{enumerate}[label=(\roman*)]
        \item As will become clear later, the proof of Theorem~\ref{Fuj-improve} relies on~\cite[Theorem 6]{Migu-2025} together with key properties of the function class $\mathcal{M}(\gamma)$, which are discussed in Appendix~\ref{appendix1}.

        \item Although the class $\mathcal M(\gamma)$ allows for oscillatory and non-regular behavior,
the blow-up versus global existence threshold is unchanged. This highlights the robustness
of the Fujita phenomenon with respect to temporal irregularities in the coefficient $h$,
provided its logarithmic growth rate is controlled.

        \item Examples of functions belonging to $\mathcal{M}(\gamma)$ include
        $$
        t^\gamma \log(1+t), \quad t^\gamma (2 + \sin(\log t)), \quad t^\gamma \exp\left(\sqrt{|\log t|}\right).
        $$
        More generally, one can consider functions of the form $\h(t) = t^\gamma \ell (t)$, where $\ell $ is slowly varying at infinity in the sense of ~\eqref{SVF}.
    \end{enumerate}
\end{rem}
 Our next result addresses the nonexistence of global solutions to~\eqref{main} in the presence of a forcing term. The precise statement is as follows:
\begin{thm}
\label{Blow-forced}
Suppose that the function $\h$ is given by
\begin{equation}
\label{h-form}
    \h(t) = t^\gamma \, \ell(t),
\end{equation}
where $\gamma > -1$ and $\ell : (0, \infty) \to (0, \infty)$ is slowly varying at infinity.  
Assume further that ${\mathbf w} \in C_0(\mathbb{R}^N) \cap L^1(\mathbb{R}^N)$ satisfies
\begin{equation}
\label{Ass-Forced}
    \int\limits_{\mathbb{R}^N} {\mathbf w}(x)\, dx > 0.
\end{equation}

\begin{enumerate}[label=(\roman*)]
    \item If $\varrho \leq 0$, $0 \leq \dfrac{b}{1+\gamma} < 2\ts < N$, and 
    \begin{equation}
    \label{Fuji-forced}
        1 < p < p^* \vcentcolon  =\frac{N - b - 2\ts(\varrho - \gamma)}{\,N - 2\ts(\varrho+1)},
    \end{equation}
    then problem~\eqref{main} admits no global weak solution in the sense of Definition~\ref{defn:weak-solution}.
    
    \item If $\varrho > 0$ and $b, \gamma \geq 0$, then the same conclusion holds for every $p > 1$.
\end{enumerate}
\end{thm}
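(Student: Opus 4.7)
The plan is to apply the test function method (Mitidieri--Pohozaev nonlinear capacity), adapted to the mixed operator $\sL$. Assume for contradiction that $u$ is a global weak solution and insert into the identity~\eqref{W-S} the rescaled family
\[
\psi_T(x,t) \;=\; \Phi\!\left(\tfrac{x}{T^{1/(2\ts)}}\right)^{\!\lambda}\,\eta\!\left(\tfrac{t}{T}\right)^{\!\lambda},
\]
where $\Phi\in C_c^\infty(\R^N)$ and $\eta\in C_c^\infty((0,\infty))$ are standard bumps equal to $1$ on inner compact sets and $\lambda>p/(p-1)$ is chosen so that a positive power of $\psi_T$ survives each differentiation. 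Since $\ts<1$, the natural parabolic scale is $|x|\sim T^{1/(2\ts)}$, the one associated with $(-\Delta)^\ts$; one checks $|\partial_t\psi_T|+|(-\Delta)^\ts\psi_T|\lesssim T^{-1}$ while $|\Delta\psi_T|\lesssim T^{-1/\ts}\le T^{-1}$, so the fractional part governs the pointwise size of $\sL\psi_T$.

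A Young inequality applied to $\int\!\int u(-\partial_t\psi_T-\sL\psi_T)\,dx\,dt$ with the natural weight absorbs one half of the nonlinear integral and reduces~\eqref{W-S} to the master inequality
\[
\int_0^\infty\!\!\int_{\R^N}\!t^\varrho\bw(x)\psi_T\,dx\,dt \;+\; \tfrac12\!\int_0^\infty\!\!\int_{\R^N}\!\h(t)|x|^{-b}|u|^p\psi_T\,dx\,dt \;\le\; C\,\mathcal{K}(T),
\]
where $\mathcal{K}(T)=\int\!\int(\h(t)|x|^{-b}\psi_T)^{-\frac{1}{p-1}}|\partial_t\psi_T+\sL\psi_T|^{\frac{p}{p-1}}\,dx\,dt$. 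Changing variables $t=T\tau$, $x=T^{1/(2\ts)}\xi$, and invoking Potter's bounds for the slowly varying factor $\ell$ from Appendix~\ref{appendix1}, one obtains
\[
\mathcal{K}(T)\le C\,T^{\alpha}\,\ell(T)^{-1/(p-1)},\qquad \alpha=\frac{N(p-1)+b-2\ts(\gamma+1)}{2\ts(p-1)}.
\]
On the forcing side, $\bw\in L^1(\R^N)$ with $\int\bw>0$ together with $\int_0^T t^\varrho\eta(t/T)^\lambda dt\asymp T^{\varrho+1}$ (allowed since $\varrho>-1$) yields, via dominated convergence, $\int\!\int t^\varrho\bw\,\psi_T\,dx\,dt\ge c\,T^{\varrho+1}$ for all $T$ sufficiently large.

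These combine to give $c\,T^{\varrho+1}\le C\,T^{\alpha}\ell(T)^{-1/(p-1)}$. In case~(i), an elementary algebraic manipulation shows that $1<p<p^*$ is exactly equivalent to $\alpha<\varrho+1$; since slowly varying functions grow or decay slower than any power, letting $T\to\infty$ produces the desired contradiction. In case~(ii), the more aggressive forcing $\varrho>0$ permits freezing the spatial profile, i.e.\ taking $\psi_T(x,t)=\Phi(x/R_0)^\lambda\eta(t/T)^\lambda$ with $R_0$ large but fixed; a direct calculation then gives $\mathcal{K}(T)\lesssim T^{(p-1-\gamma)/(p-1)}\ell(T)^{-1/(p-1)}$, whose exponent is at most $1<\varrho+1$, producing the same contradiction for every $p>1$. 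The principal technical obstacle is controlling $(-\Delta)^\ts\psi_T$ inside $\mathcal{K}(T)$: unlike the local Laplacian, $(-\Delta)^\ts$ does not preserve compact support and admits no pointwise bound of the form $|(-\Delta)^\ts\psi_T|\lesssim\psi_T^{(\lambda-1)/\lambda}\cdot(\cdots)$. This is handled by splitting the integral according to $\{|x|\lesssim T^{1/(2\ts)}\}$ (using the uniform bound $T^{-1}$ and the $\lambda$-trick for the local piece) and its complement (exploiting the polynomial decay $|(-\Delta)^\ts\Phi(\cdot/R)(x)|\lesssim R^N|x|^{-N-2\ts}$ together with a separate Hölder estimate that does not require the weight $\psi_T^{-1/(p-1)}$); the two pieces ultimately contribute the same order $T^\alpha$.
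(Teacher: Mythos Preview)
Your overall strategy coincides with the paper's: the same test-function method, the same parabolic scaling $|x|\sim T^{1/(2\ts)}$ in case~(i), the same decoupled $(R,T)$ scaling in case~(ii), the same $\varepsilon$-Young absorption, and the same exponent calculus. The paper parametrizes by the spatial scale $R$ with $T=R^{2\ts}$; after this substitution your exponent $\alpha$ and the condition $\alpha<\varrho+1\Leftrightarrow p<p^*$ match the paper's inequality $N-2\ts(\varrho+1)+\tfrac{b-2\ts(1+\gamma)}{p-1}<0$ exactly. For the slowly varying factor the paper uses the Uniform Convergence Theorem (via Proposition~\ref{Asymp-Int}) rather than Potter's bounds, but either yields the needed estimate $\int_a^b\h(T\tau)^{-1/(p-1)}d\tau\sim C\,T^{-\gamma/(p-1)}\ell(T)^{-1/(p-1)}$.

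The one substantive difference---and where your outline has a genuine gap---is the treatment of the nonlocal tail of $(-\Delta)^{\ts}\psi_T$. You propose a near/far splitting and, on $\{|x|\gtrsim T^{1/(2\ts)}\}$, a ``separate H\"older estimate that does not require the weight $\psi_T^{-1/(p-1)}$''. But on that set $\psi_T\equiv 0$, so the weak formulation carries no information about $u$ there; the only a~priori control is $\h(t)|x|^{-b}|u|^p\in L^1_{\mathrm{loc}}$, which is insufficient to bound $\int_{\text{far}}|u|\,|(-\Delta)^{\ts}\psi_T|$ by H\"older against anything finite. The far-field decay $R^N|x|^{-N-2\ts}$ is correct but does not help without integrability of $u$ at infinity. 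The paper avoids the splitting entirely by invoking the convexity inequality for the fractional Laplacian (Lemma~\ref{Punzo}): with $G(s)=s^{\bm}$ and $\bm=\tfrac{2p}{p-1}>2$ one has the pointwise bound
\[
(-\Delta)^{\ts}\phi^{\bm}\;\le\;\bm\,\phi^{\bm-1}\,(-\Delta)^{\ts}\phi,
\]
whose right-hand side vanishes outside $\operatorname{supp}\phi$. This keeps the whole Young step confined to $\{|x|\le 2R\}$, and the residual factor $\phi^{\,\bm-p/(p-1)}=\phi^{\,p/(p-1)}$ remains bounded there; combined with the scaling $|(-\Delta)^{\ts}\phi(\cdot/R)|\lesssim R^{-2\ts}$ this gives directly $\cj_1\lesssim R^{N+(b-2\ts)/(p-1)}\int_{1/4}^{4/5}\h(R^{2\ts}\tau)^{-1/(p-1)}d\tau$. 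Replace your near/far argument with this device and the rest of your sketch goes through as written.
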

\begin{rem}
~\rm
\begin{enumerate}[label=(\roman*)]
\item The case $\h = 1$ with $b = \varrho = 0$ was recently studied in~\cite{Beri-arXiv}.
\item Condition $\dfrac{b}{1+\gamma} < 2\ts$ guarantees that the exponent $p^*$ in \eqref{Fuji-forced} satisfies $p^* > 1$.
    \item For related results in the case $\ts = 1$, we refer the reader to~\cite{Opuscula, BLZ, JKS, MM}.  
In particular, when $b = \gamma = 0$ and $\varrho \in (-1,0)$, one has
$$
    p^* = \frac{N - 2\varrho}{\,N - 2\varrho - 2},
$$
in agreement with~\cite[Theorem~1.1, (1.8)]{JKS}.

    \item The particular case where $\h(t) = 1$ and $\ts \geq 1$ is an integer was previously studied in~\cite{Majd}.
    \item One of the main novelties of this work, beyond the use of a mixed local-nonlocal operator, is the general form~\eqref{h-form} assumed for the function $\h$. To the best of our knowledge, this is the first time such a class of time-dependent coefficients is considered in the study of blow-up phenomena for nonlinear diffusion equations.
    \item As the proof will show, assumption \eqref{h-form} can be relaxed to $\h \in\mathcal{M}(\gamma)$
\end{enumerate}
\end{rem}
We now turn to the global-in-time theory and prove the following small-data global existence result.

\begin{thm}\label{Global-forced}
Assume that the function $h$ is given by \eqref{h-form}, namely
\[
h(t)=t^\gamma \ell(t),
\]
where $-1<\gamma \leq 0$, and where $\ell:(0,\infty)\to(0,\infty)$ is a continuous, bounded function that is slowly varying at infinity. 
Let $\ts \in (0,1)$, $-1<\varrho<0$, and suppose that
\[
0 \leq \frac{b}{1+\gamma} < 2\ts < N.
\]
Assume moreover that
\[
p>p^*,
\]
where the critical exponent $p^*$ is defined in \eqref{Fuji-forced}. 
Introduce the exponents
\begin{align}
p_c &:= \frac{N(p-1)}{2\ts(1+\gamma)-b}, \label{pc} \\
q_c &:= \frac{N p_c}{N + 2\ts(\varrho+1)p_c}. \label{qc}
\end{align}

Then there exists $\varepsilon>0$ such that, for every initial datum $u_0$ and forcing term $\mathbf{w}$ satisfying
\begin{equation}
\label{Small-GE}
\|u_0\|_{L^{p_c}(\mathbb{R}^N)}
+
\|\mathbf{w}\|_{L^{q_c}(\mathbb{R}^N)}
\leq \varepsilon,
\end{equation}
the problem \eqref{main} admits a global mild solution.
\end{thm}
\begin{rem}
    ~\rm 
    \begin{enumerate}[label=(\roman*)]
    \item The boundedness assumption on $\ell$ is essential in our argument: it ensures 
    that the temporal Beta-type integrals appearing in the fixed-point estimates remain 
    uniformly controlled for all $t > 0$, and not merely for large $t$. Without it, one only 
    has the asymptotic equivalence
    $$\int_0^1 \tau^\alpha(1-\tau)^\beta \ell(t\tau) \, d\tau \sim \ell(t) \mathscr{B}(\alpha+1, \beta+1) 
    \quad (t \to \infty),$$
    which does not control the small-time behavior in the form required by the supremum-in-time 
    norm of the functional space. This uniformity is critical for the contraction property of 
    the Duhamel operator in weighted spaces.
 
\item The class of admissible $\ell$ remains remarkably rich and flexible. It includes:
    \begin{itemize}
        \item the constant case $\ell \equiv 1$, in which $\h(t) = t^\gamma$;
        \item bounded oscillatory examples such as $\ell(t) = 2 + \sin(\log(1 + |\log(t)|))$, 
        which exhibit non-monotone behavior;
        \item bounded decaying slowly varying functions such as $\ell(t) = \frac{1}{\log(e+t)}$, 
        which decay to zero as $t \to \infty$.
    \end{itemize}
    In contrast, unbounded slowly varying functions such as $\ell(t) = \log(1+t)$ or 
    $\ell(t) = \exp(\sqrt{\log(1+t)})$ are excluded by the boundedness requirement. 
\item Extending Theorem \ref{Global-forced} to unbounded slowly varying $\ell$ would 
    require a refined functional framework. One would need to employ weighted spaces of the form 
    $t^\mu \ell(t)^{-1/(p-1)} \|u(t)\|_r$ in place of the standard supremum-in-time norm, 
    combined with Potter-type estimates for the quotient $\ell(t\tau)/\ell(t)$. Such an approach 
    would enhance the scope of admissible coefficients but introduces substantial additional 
    complexity. This represents a natural and worthwhile direction for future work.
  
\item Extending the result to $\gamma > 0$ would require fundamentally different 
    techniques, such as constructing weighted spaces that account for the temporal growth of $\h$, 
    and is left as an open problem.
        \item Observe that, using \eqref{pc} and \eqref{qc} together with the 
    condition $\varrho > -1$, we obtain the scale-invariant relation
    $$\frac{1}{q_c} = \frac{1}{p_c} + \frac{2s(\varrho+1)}{N} > \frac{1}{p_c},$$
    which correctly reflects the interplay between the fractional diffusion scale $p_c$ and 
    the temporal-forcing scale $q_c$.
    \item Related global existence results for semilinear parabolic equations with 
    mixed diffusion and time-dependent coefficients have been obtained in~\cite{Beri-arXiv, JKS, MM, Majd} 
    and the references therein. Our contribution extends these results to the broader class 
    of regularly varying coefficients in $M(\gamma)$.
    \item The limiting case $p = p^*$ (the critical Fujita exponent) is not covered 
    by Theorem \ref{Global-forced} and remains an important open question. 
    \end{enumerate}
\end{rem}
The structure of the article is as follows. In Section~\ref{S2}, we introduce the notation used throughout the paper and present several auxiliary results and estimates. Section~\ref{Unforced} is devoted to the study of the unforced problem \eqref{Main-eq-bis}, where we provide the proof of Theorem~\ref{Fuj-improve}. In Section~\ref{Forced}, we address the main problem \eqref{main} and establish Theorems~\ref{Blow-forced} and~\ref{Global-forced}. Concluding remarks and directions for future research are given in Section~\ref{Conc-sec}. Finally, Appendix~\ref{appendix1} contains a brief overview of regularly varying functions together with several useful estimates employed in our analysis.  

Throughout the remainder of the article, the constant $C>0$ may vary from line to line. We write $X \lesssim Y$ or $Y \gtrsim X$ to indicate the inequality $X \leq CY$ for some constant $C>0$. The Lebesgue norm $\|\cdot\|_{L^r(\mathbb{R}^N)}$ is denoted by $\|\cdot\|_{r}$ for $1 \leq r \leq \infty$.

\section{Useful tools \& Auxiliary results}
\label{S2}
In this section, we introduce the notation used throughout the paper and present several auxiliary results and estimates. 

The fractional Laplacian operator $(-\Delta)^{\ts}$ with $\ts\in (0,1)$ generates a semigroup $\{e^{-t(-\Delta)^{\ts}}\}_{t \geq 0}$, whose kernel $\mathscr{E}_{\ts}$ is smooth, radial, and satisfies the scaling property
\begin{equation}\label{kernel}
    \mathscr{E}_{\ts}(x,t) = t^{-\frac{N}{2\ts}} \, \mathscr{K}_{\ts}\!\left(t^{-\frac{1}{2\ts}}x\right),
\end{equation}
where the profile function $\mathscr{K}_{\ts}$ is given by the Fourier integral
\begin{equation}\label{K-s}
    \mathscr{K}_{\ts}(x) = (2\pi)^{-N/2}\int\limits_{\R^N} e^{i x\cdot\xi} e^{-|\xi|^{2\ts}} \, d\xi.
\end{equation}

Explicit formulas for $\mathscr{E}_{\ts}$ are available in two important cases:
\begin{itemize}
    \item For $\ts=1$ (standard heat kernel):
    \begin{equation}\label{Gauss}
        \mathscr{E}_{1}(x,t) = (4\pi t)^{-N/2} e^{-\tfrac{|x|^2}{4t}}, 
        \qquad \mathscr{K}_{1}(x) = (4\pi)^{-N/2} e^{-\tfrac{|x|^2}{4}}.
    \end{equation}
    \item For $\ts=\tfrac{1}{2}$ (Poisson kernel):
    \begin{equation}\label{Poisson}
        \mathscr{E}_{1/2}(x,t) = 
        \frac{\Gamma\!\left(\tfrac{N+1}{2}\right) t}{\pi^{\frac{N+1}{2}} (t^2 + |x|^2)^{\frac{N+1}{2}}},
        \qquad 
        \mathscr{K}_{1/2}(x) = 
        \frac{\Gamma\!\left(\tfrac{N+1}{2}\right)}{\pi^{\frac{N+1}{2}} (1 + |x|^2)^{\frac{N+1}{2}}}.
    \end{equation}
\end{itemize}

For general $\ts \in (0,1)$, while no explicit representation is known, the following positivity estimate holds.

\begin{lem}\label{Positive}
Let $N \geq 1$ and $\ts \in (0,1)$. Then the profile function $\mathscr{K}_{\ts}$ satisfies
\begin{equation}\label{kthetaest}
    (1+|x|)^{-N-2\ts} \,\lesssim\, \mathscr{K}_{\ts}(x) \,\lesssim\, (1+|x|)^{-N-2\ts},
    \qquad x \in \R^N.
\end{equation}
In particular, $\mathscr{K}_{\ts} \in L^p(\R^N)$ for all $1 \leq p \leq \infty$.
\end{lem}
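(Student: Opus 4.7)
The plan is to establish \eqref{kthetaest} separately on the two regimes $|x|\leq 1$ and $|x|\geq 1$, since on the unit ball the weight $(1+|x|)^{-N-2\ts}$ is comparable to $1$, while for $|x|\geq 1$ it is comparable to $|x|^{-N-2\ts}$.

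For $|x|\leq 1$, the integrability of $e^{-|\xi|^{2\ts}}$ on $\R^{N}$ shows directly from \eqref{K-s} that $\mathscr{K}_{\ts}$ is continuous and bounded. Strict positivity can be read off from the Bochner subordination identity
\[
\mathscr{K}_{\ts}(x) \;=\; c\int_{0}^{\infty} u^{-N/2}\, e^{-|x|^{2}/(4u)}\, \eta_{\ts}(u)\, du,
\]
where $\eta_{\ts}\geq 0$ denotes the density of the one-sided $\ts$-stable subordinator and $c>0$ is an absolute constant. Since the integrand is strictly positive for every $x$, $\mathscr{K}_{\ts}>0$ everywhere, and continuity then supplies constants $0<c_{1}\leq c_{2}$ with $c_{1}\leq \mathscr{K}_{\ts}(x)\leq c_{2}$ on the closed unit ball, which is exactly \eqref{kthetaest} in this regime.

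For $|x|\geq 1$ I would re-use the same subordination identity together with the classical properties of $\eta_{\ts}$: the polynomial tail $\eta_{\ts}(u)\sim c_{\ts}\, u^{-1-\ts}$ as $u\to\infty$, the uniform bound $\eta_{\ts}(u)\lesssim u^{-1-\ts}$ on $(0,\infty)$, and the fact that $\eta_{\ts}(u)$ decays faster than any polynomial as $u\to 0^{+}$. Splitting the integral at $u=|x|^{2}$, the region $u\geq |x|^{2}$ is the principal contribution: the Gaussian factor there is bounded between two positive constants, so this piece is comparable to
\[
\int_{|x|^{2}}^{\infty} u^{-N/2-1-\ts}\,du \;\asymp\; |x|^{-N-2\ts},
\]
which already delivers the matching lower bound. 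On the complementary region $u\leq |x|^{2}$, the factor $e^{-|x|^{2}/(4u)}$ provides enough exponential decay to absorb $u^{-N/2-1-\ts}$; a change of variables $v=|x|^{2}/(4u)$ turns this piece into a $\Gamma$-type integral of order $|x|^{-N-2\ts}$ (or smaller), giving the matching upper bound.

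The main obstacle is the careful bookkeeping of the second split integral, which hinges on the uniform tail estimates for $\eta_{\ts}$; these are classical and can be invoked from the standard references on subordinators and $\ts$-stable processes quoted in the introduction \cite{Applebaum, Bertoin, Bogdan}. Merging the two regimes via $(1+|x|)\asymp \max(1,|x|)$ then yields \eqref{kthetaest}. The $L^{p}$-membership for $1\leq p\leq \infty$ is an immediate byproduct: $\mathscr{K}_{\ts}$ is bounded (giving $L^{\infty}$) and $(1+|x|)^{-N-2\ts}$ is integrable because $N+2\ts>N$, so $\mathscr{K}_{\ts}\in L^{1}\cap L^{\infty}$ and interpolation closes the claim.
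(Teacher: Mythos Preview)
Your argument via Bochner subordination is sound: positivity and continuity on the unit ball follow as you say, and for $|x|\geq 1$ the split at $u=|x|^{2}$ with the tail behavior $\eta_{\ts}(u)\asymp u^{-1-\ts}$ for large $u$ and the uniform bound $\eta_{\ts}(u)\lesssim u^{-1-\ts}$ (which holds because $\eta_{\ts}(u)\to 0$ super-exponentially as $u\to 0^{+}$) gives the matching two-sided estimate after the $\Gamma$-integral change of variables you describe.

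There is, however, nothing to compare your proof against in the paper itself: the authors do not supply their own argument for this lemma but simply refer the reader to \cite[p.~395]{Alonso2021} and \cite[Theorem~2.1]{BG1960}. The subordination route you take is in fact the one underlying the Blumenthal--Getoor derivation, so your proposal is essentially a fleshed-out version of what those references contain. In that sense you have done more than the paper asks --- the paper treats \eqref{kthetaest} as a black-box classical fact, whereas you have reconstructed the proof.
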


The proof appears in \cite[p.~395]{Alonso2021}, while the positivity result was first stated without proof in \cite[p.~263]{BG1960}. A detailed argument can also be found in \cite[Theorem~2.1]{BG1960}.

The operator $\mathscr{L}=\Delta-(-\Delta)^{\ts}$ generates a strongly continuous contraction semigroup $\{e^{t\mathscr{L}}\}_{t\geq 0}$ on $L^2(\R^N)$, where each operator $e^{t\mathscr{L}}$ is given by convolution with the fundamental solution $\mathbf{E}_{\ts}(t)$. This fundamental solution $\mathbf{E}_{\ts}(x,t)$ solves the evolution equation
\begin{equation}\label{eq-1-1-1}
    \partial_t u(x,t) = \mathscr{L} u(x,t),
    \qquad (x,t)\in \R^N\times (0,\infty),
\end{equation}
with Dirac mass as initial data. It can be expressed as the convolution of the classical heat kernel $\mathscr{E}_{1}(x,t) = (4\pi t)^{-N/2} e^{-\frac{|x|^2}{4t}}$ and the fractional heat kernel $\mathscr{E}_{\ts}(x,t)$ from \eqref{kernel}.  

The fundamental solution $\mathbf{E}_{\ts}(x,t)$ enjoys several important properties (see, e.g., \cite{Kirane2025} and \cite{biagi2024}).

\begin{lem}\label{E-s}
Let $N\ge 1$ and $\ts \in (0,1)$. Then the following hold:
\begin{enumerate}[label=(\roman*)]
    \item \textsf{Regularity and positivity}:  
    $\mathbf{E}_{\ts} \in C^\infty(\R^N \times (0,\infty))$ and $\mathbf{E}_{\ts}(x,t) \geq 0$ for all $(x,t) \in \R^N \times (0,\infty)$.
    
    \item \textsf{Mass conservation}:  
    The kernel preserves total mass:
    \begin{equation}\label{Mass-conv-E}
        \int\limits_{\R^N} \mathbf{E}_{\ts}(x,t)\,dx = 1,
        \qquad t > 0.
    \end{equation}   
\end{enumerate}
\end{lem}

As a consequence of Lemma~\ref{E-s}, we obtain the following lower bound for the semigroup $e^{t\sL}$. A proof may be found, for example, in~\cite{Migu-2025}.

\begin{lem}\label{Bounds-L}
Let $0 \leq \varphi \in L^1(\R^N)\cap L^\infty(\R^N)$ be nontrivial, and let $t>0$. Then
\begin{equation}\label{LB}
\|e^{t\sL}\varphi\|_{\infty} \;\gtrsim\; t^{-\frac{N}{2\ts}}.
\end{equation}
\end{lem}

Next, we give the smoothing estimate for the semigroup $e^{t\mathscr{L}}$.

\begin{lem}[Smoothing estimate for $e^{t\mathscr{L}}$]\label{lem:smoothing}
Let $N\geq 1$ and $\ts\in(0,1)$. There exists a constant $C=C(N,\ts)>0$ such that, for every $1\leq r\leq q\leq\infty$ and every $\varphi\in L^{r}(\mathbb{R}^{N})$,
\begin{equation}\label{eq:smoothing-lemma}
\bigl\|e^{t\mathscr{L}}\varphi\bigr\|_{q}
\;\leq\; C\,t^{-\frac{N}{2s}\left(\frac{1}{r}-\frac{1}{q}\right)}\,
\|\varphi\|_{r},\qquad t>0.
\end{equation}
\end{lem}

\begin{proof}
Taking the Fourier transform of \eqref{eq-1-1-1} with Dirac initial data gives
\begin{equation}\label{eq:fourier-kernel}
\widehat{\mathbf{E}_{\ts}}(\xi,t)=e^{-t\,\psi(\xi)},\qquad
\psi(\xi):=|\xi|^{2}+|\xi|^{2\ts},\quad \xi\in\mathbb{R}^{N},\ t>0.
\end{equation}
The function $\psi$, being the sum of two continuous negative-definite functions (the symbols of $-\Delta$ and $(-\Delta)^{\ts}$), is itself continuous negative-definite. By the L\'evy--Khintchine theorem, $e^{-t\psi}$ is the characteristic function of a probability measure; its integrability ensures the existence of a density $\mathbf{E}_{\ts}(\cdot,t)$. Equivalently, $\mathbf{E}_{\ts}(\cdot,t)=G_{t}\ast K_{t}^{\ts}$, where $G_{t}$ and $K_{t}^{\ts}$ are the Gaussian and symmetric $2\ts$-stable kernels, respectively. Both being non-negative,
\begin{equation}\label{eq:positivity-mass}
\mathbf{E}_{\ts}(\cdot,t)\geq 0,\qquad
\|\mathbf{E}_{\ts}(\cdot,t)\|_{L^{1}}=\widehat{\mathbf{E}_{\ts}}(0,t)=1.
\end{equation}

\medskip
\noindent\textbf{Step 1: Diagonal contraction.}
By Young's convolution inequality and \eqref{eq:positivity-mass},
\begin{equation}\label{eq:contraction}
\|e^{t\mathscr{L}}\varphi\|_{L^{p}}
=\|\mathbf{E}_{\ts}(\cdot,t)\ast\varphi\|_{L^{p}}
\leq\|\varphi\|_{L^{p}},
\qquad 1\leq p\leq\infty,\ t>0,
\end{equation}
which is \eqref{eq:smoothing-lemma} in the diagonal case $r=q$.

\medskip
\noindent\textbf{Step 2: $L^{1}\!\to\!L^{\infty}$ bound.}
We claim
\begin{equation}\label{eq:Linfty-kernel}
\|\mathbf{E}_{\ts}(\cdot,t)\|_{L^{\infty}}\leq C\,t^{-N/(2\ts)},\qquad t>0,
\end{equation}
with $C=C(N,\ts)$. By Fourier inversion (using $\mathbf{E}_{\ts}(\cdot,t)\geq 0$) and the substitution $\eta=t^{1/(2\ts)}\xi$,
\begin{equation}\label{eq:rescale}
\begin{split}
\|\mathbf{E}_{\ts}(\cdot,t)\|_{L^{\infty}}
&\leq\frac{1}{(2\pi)^{N}}\int_{\mathbb{R}^{N}}e^{-t(|\xi|^{2}+|\xi|^{2\ts})}\,d\xi \\
&=\frac{t^{-N/(2\ts)}}{(2\pi)^{N}}\int_{\mathbb{R}^{N}}\exp\!\bigl(-t^{\,1-1/\ts}|\eta|^{2}-|\eta|^{2\ts}\bigr)\,d\eta \\
&\leq \left(\frac{1}{(2\pi)^{N}}\int_{\mathbb{R}^{N}}e^{-|\eta|^{2\ts}}\,d\eta\right)t^{-N/(2\ts)}.
\end{split}
\end{equation}
Hence \eqref{eq:Linfty-kernel} holds, and consequently
\begin{equation}\label{eq:L1Linfty}
\|e^{t\mathscr{L}}\varphi\|_{L^{\infty}}
\leq\|\mathbf{E}_{\ts}(\cdot,t)\|_{L^{\infty}}\|\varphi\|_{L^{1}}
\leq C\,t^{-N/(2\ts)}\|\varphi\|_{L^{1}}.
\end{equation}

\medskip
\noindent\textbf{Step 3: Interpolation for general $(r,q)$.}
Fix $1\leq r\leq q\leq\infty$ and define $\rho\in[1,\infty]$ by
\[
\tfrac{1}{\rho}=1-\bigl(\tfrac{1}{r}-\tfrac{1}{q}\bigr),
\qquad\text{so that}\qquad
\tfrac{1}{\rho}+\tfrac{1}{r}=1+\tfrac{1}{q}.
\]
By log-convexity of $L^{p}$-norms together with \eqref{eq:positivity-mass} and \eqref{eq:Linfty-kernel},
\[
\|\mathbf{E}_{\ts}(\cdot,t)\|_{L^{\rho}}
\leq\|\mathbf{E}_{\ts}(\cdot,t)\|_{L^{1}}^{\,1/\rho}
\|\mathbf{E}_{\ts}(\cdot,t)\|_{L^{\infty}}^{\,1-1/\rho}
\leq C\,t^{-\frac{N}{2\ts}\left(\frac{1}{r}-\frac{1}{q}\right)}.
\]
Young's convolution inequality then yields
\[
\|e^{t\mathscr{L}}\varphi\|_{L^{q}}
\leq\|\mathbf{E}_{\ts}(\cdot,t)\|_{L^{\rho}}\|\varphi\|_{L^{r}}
\leq C\,t^{-\frac{N}{2\ts}\left(\frac{1}{r}-\frac{1}{q}\right)}\|\varphi\|_{L^{r}},
\]
which proves \eqref{eq:smoothing-lemma} for all $1\leq r\leq q\leq\infty$ and all $t>0$.
\end{proof}

\medskip

Building on Lemma~\ref{lem:smoothing} and a scaling argument, we now establish the following weighted analog of \eqref{eq:smoothing-lemma}, which will be used to handle the singular weight $|\cdot|^{-b}$ in the nonlinear term of \eqref{main}.

\begin{lem}\label{lem:weighted-smoothing}
Let $N\geq 1$, $s\in(0,1)$, and $0<\gamma<N$. Let $q_{1},q_{2}\in(1,\infty]$ satisfy
\begin{equation}\label{eq:wcond}
0\;\leq\;\frac{1}{q_{2}}\;<\;\frac{\gamma}{N}+\frac{1}{q_{1}}\;<\;1.
\end{equation}
Then there exists a constant $C=C(N,s,\gamma,q_{1},q_{2})>0$ such that, for every $t>0$ and every $u\in L^{q_{1}}(\mathbb{R}^{N})$,
\begin{equation}\label{eq:weighted-smoothing}
\bigl\|e^{t\mathscr{L}}\bigl(|\cdot|^{-\gamma}u\bigr)\bigr\|_{L^{q_{2}}(\mathbb{R}^{N})}
\;\leq\;C\,t^{-\frac{N}{2s}\left(\frac{1}{q_{1}}-\frac{1}{q_{2}}\right)-\frac{\gamma}{2s}}\,
\|u\|_{L^{q_{1}}(\mathbb{R}^{N})}.
\end{equation}
In addition, the map $u\mapsto e^{t\mathscr{L}}(|\cdot|^{-\gamma}u)$ sends $L^{q_{1}}(\mathbb{R}^{N})$ continuously into $L^{q_{2}}(\mathbb{R}^{N})$ when $q_{2}<\infty$, and into $C_{0}(\mathbb{R}^{N})$ when $q_{2}=\infty$.
\end{lem}

\begin{proof}
We follow the strategy of \cite[Proposition~2.1]{BTW}: prove \eqref{eq:weighted-smoothing} at $t=1$ via a decomposition of the singular weight, and then extend to all $t>0$ by scaling. Because $\mathscr{L}=\Delta-(-\Delta)^{\ts}$ is the sum of two operators of different homogeneities, exact scaling is unavailable for $\mathscr{L}$ itself. We therefore reduce the proof to the fractional heat semigroup $e^{-t(-\Delta)^{\ts}}$, which scales exactly.

\medskip
\noindent\textbf{Step 0: Reduction to the fractional heat semigroup.}
Since $-\Delta$ and $(-\Delta)^{\ts}$ are commuting Fourier multipliers,
\begin{equation}\label{eq:semi-factor}
e^{t\mathscr{L}}=e^{t\Delta}\,e^{-t(-\Delta)^{\ts}},\qquad t>0.
\end{equation}
The heat semigroup $e^{t\Delta}$ is convolution with a Gaussian of unit mass and is therefore an $L^{q_{2}}$-contraction: $\|e^{t\Delta}f\|_{L^{q_{2}}}\leq\|f\|_{L^{q_{2}}}$. Consequently,
\[
\bigl\|e^{t\mathscr{L}}(|\cdot|^{-\gamma}u)\bigr\|_{L^{q_{2}}}
\;\leq\;\bigl\|e^{-t(-\Delta)^{\ts}}(|\cdot|^{-\gamma}u)\bigr\|_{L^{q_{2}}},
\]
and it suffices to prove \eqref{eq:weighted-smoothing} with $e^{t\mathscr{L}}$ replaced by $e^{-t(-\Delta)^{\ts}}$.

\medskip
\noindent\textbf{Step 1: Smoothing for $e^{-t(-\Delta)^{\ts}}$.}
We need the following analog of \eqref{eq:smoothing-lemma} for the fractional heat semigroup: for every $1\leq r\leq q\leq\infty$ there exists $C=C(N,\ts)>0$ such that
\begin{equation}\label{eq:frac-smoothing}
\bigl\|e^{-t(-\Delta)^{\ts}}\varphi\bigr\|_{L^{q}}
\leq C\, t^{-\frac{N}{2\ts}\left(\frac{1}{r}-\frac{1}{q}\right)}\,\|\varphi\|_{L^{r}},
\qquad t>0.
\end{equation}
Indeed, the kernel $K_{t}^{\ts}\geq 0$ has Fourier symbol $e^{-t|\xi|^{2\ts}}$ and $\|K_{t}^{\ts}\|_{L^{1}}=1$. By Fourier inversion together with the substitution $\eta=t^{1/(2\ts)}\xi$,
\[
\|K_{t}^{\ts}\|_{L^{\infty}}
\leq \frac{1}{(2\pi)^{N}}\int_{\mathbb{R}^{N}} e^{-t|\xi|^{2\ts}}\,d\xi
= \frac{t^{-N/(2\ts)}}{(2\pi)^{N}}\int_{\mathbb{R}^{N}} e^{-|\eta|^{2\ts}}\,d\eta
\leq C\, t^{-N/(2\ts)}.
\]
Log-convexity of $L^{p}$-norms gives $\|K_{t}^{\ts}\|_{L^{\rho}}\leq C\, t^{-(N/(2\ts))(1-1/\rho)}$ for every $\rho\in[1,\infty]$, and \eqref{eq:frac-smoothing} follows from Young's convolution inequality with $\frac{1}{\rho}=1-(\frac{1}{r}-\frac{1}{q})$.

\medskip
\noindent\textbf{Step 2: The weighted estimate at $t=1$.}
Set $m:=N/\gamma$. By \eqref{eq:wcond} we may choose $\varepsilon,\delta>0$ small enough that
\begin{equation}\label{eq:wparams}
\varepsilon<m,\qquad
\frac{1}{q_{2}}\;\leq\;\frac{1}{m+\delta}+\frac{1}{q_{1}}\;\leq\;\frac{1}{m-\varepsilon}+\frac{1}{q_{1}}\;\leq\;1,
\end{equation}
and split $|\cdot|^{-\gamma}=\psi_{1}+\psi_{2}$, where
\[
\psi_{1}:=|\cdot|^{-\gamma}\mathbf{1}_{\{|x|\leq 1\}}\in L^{m-\varepsilon},\qquad
\psi_{2}:=|\cdot|^{-\gamma}\mathbf{1}_{\{|x|>1\}}\in L^{m+\delta}.
\]
Define $r_{1},r_{2}\in[1,q_{2}]$ by
\[
\frac{1}{r_{1}}:=\frac{1}{m-\varepsilon}+\frac{1}{q_{1}},\qquad
\frac{1}{r_{2}}:=\frac{1}{m+\delta}+\frac{1}{q_{1}}.
\]
H\"older's inequality yields
\[
\|\psi_{1}u\|_{L^{r_{1}}}\leq\|\psi_{1}\|_{L^{m-\varepsilon}}\|u\|_{L^{q_{1}}},
\qquad
\|\psi_{2}u\|_{L^{r_{2}}}\leq\|\psi_{2}\|_{L^{m+\delta}}\|u\|_{L^{q_{1}}}.
\]
Since \eqref{eq:wparams} ensures $r_{1},r_{2}\leq q_{2}$, applying \eqref{eq:frac-smoothing} at $t=1$ to each $\psi_{i}u$ and summing gives
\begin{equation}\label{eq:wt1}
\bigl\|e^{-(-\Delta)^{\ts}}(|\cdot|^{-\gamma}u)\bigr\|_{L^{q_{2}}}
\;\leq\;C_{0}\,\|u\|_{L^{q_{1}}},
\end{equation}
with $C_{0}=C_{0}(N,\ts,\gamma,q_{1},q_{2})$. When $q_{2}<\infty$, this is the claimed continuity of $u\mapsto e^{-(-\Delta)^{\ts}}(|\cdot|^{-\gamma}u)\in L^{q_{2}}$. When $q_{2}=\infty$, since $r_{1},r_{2}<\infty$, \eqref{eq:frac-smoothing} additionally gives $e^{-(-\Delta)^{\ts}}(\psi_{i}u)\in C_{0}(\mathbb{R}^{N})$, and hence so does the sum.

\medskip
\noindent\textbf{Step 3: Scaling identity.}
For $\lambda>0$, let $D_{\lambda}\varphi(x):=\varphi(\lambda x)$. The following are immediate:
\begin{enumerate}[label=(\roman*)]
\item $\|D_{\lambda}\varphi\|_{L^{p}}=\lambda^{-N/p}\|\varphi\|_{L^{p}}$, $\;1\leq p\leq\infty$;
\item $|\cdot|^{-\gamma}D_{\lambda}u=\lambda^{\gamma}D_{\lambda}(|\cdot|^{-\gamma}u)$;
\item the Fourier-multiplier identity
\begin{equation}\label{eq:scaling-semigroup}
e^{-(-\Delta)^{\ts}}\,D_{\lambda}=D_{\lambda}\,e^{-\lambda^{2\ts}(-\Delta)^{\ts}},
\end{equation}
which follows from $\widehat{D_{\lambda}\varphi}(\xi)=\lambda^{-N}\widehat{\varphi}(\xi/\lambda)$ together with the symbol identity $\lambda^{2\ts}|\xi/\lambda|^{2\ts}=|\xi|^{2\ts}$.
\end{enumerate}

\medskip
\noindent\textbf{Step 4: Conclusion.}
Fix $t>0$ and set $\lambda:=t^{1/(2\ts)}$, so that $\lambda^{2\ts}=t$. Applying \eqref{eq:wt1} to $D_{\lambda}u$ in place of $u$ and using (i),
\begin{equation}\label{eq:wt1-rescaled}
\bigl\|e^{-(-\Delta)^{\ts}}(|\cdot|^{-\gamma}D_{\lambda}u)\bigr\|_{L^{q_{2}}}
\leq C_{0}\,\|D_{\lambda}u\|_{L^{q_{1}}}
=C_{0}\,\lambda^{-N/q_{1}}\,\|u\|_{L^{q_{1}}}.
\end{equation}
By (ii) and \eqref{eq:scaling-semigroup} (with $\lambda^{2\ts}=t$),
\[
e^{-(-\Delta)^{\ts}}(|\cdot|^{-\gamma}D_{\lambda}u)
=\lambda^{\gamma}\,e^{-(-\Delta)^{\ts}}D_{\lambda}(|\cdot|^{-\gamma}u)
=\lambda^{\gamma}\,D_{\lambda}\,e^{-t(-\Delta)^{\ts}}(|\cdot|^{-\gamma}u).
\]
Taking $L^{q_{2}}$-norms of both sides and using (i) once more,
\[
\lambda^{\gamma-N/q_{2}}\,\bigl\|e^{-t(-\Delta)^{\ts}}(|\cdot|^{-\gamma}u)\bigr\|_{L^{q_{2}}}
\leq C_{0}\,\lambda^{-N/q_{1}}\,\|u\|_{L^{q_{1}}},
\]
hence
\begin{equation}\label{eq:wt-Llambda}
\bigl\|e^{-t(-\Delta)^{\ts}}(|\cdot|^{-\gamma}u)\bigr\|_{L^{q_{2}}}
\leq C_{0}\,\lambda^{-N(1/q_{1}-1/q_{2})-\gamma}\,\|u\|_{L^{q_{1}}}
=C_{0}\,t^{-\frac{N}{2\ts}\left(\frac{1}{q_{1}}-\frac{1}{q_{2}}\right)-\frac{\gamma}{2\ts}}\,\|u\|_{L^{q_{1}}}.
\end{equation}
Combining \eqref{eq:wt-Llambda} with \eqref{eq:semi-factor} and the $L^{q_{2}}$-contraction of $e^{t\Delta}$ yields \eqref{eq:weighted-smoothing} for every $t>0$.

The continuity claims for the map $u\mapsto e^{t\mathscr{L}}(|\cdot|^{-\gamma}u)$ follow from those for $e^{-t(-\Delta)^{\ts}}(|\cdot|^{-\gamma}u)$ established in Step~2 and propagated to all $t>0$ by the scaling identity \eqref{eq:scaling-semigroup}, together with the boundedness of $e^{t\Delta}$ on $L^{q_{2}}(\mathbb{R}^{N})$ (when $q_{2}<\infty$) and on $C_{0}(\mathbb{R}^{N})$ (when $q_{2}=\infty$).
\end{proof}

\section{The unforced problem}
\label{Unforced}
In this section, we present the proof of Theorem~\ref{Fuj-improve}, which concerns the unforced problem~\eqref{Main-eq-bis}.  
Our approach follows the general strategy developed in~\cite[Theorem~6]{Migu-2025}. The proof consists in verifying the sufficient criteria for blow-up and global existence stated therein. Special attention is required because the coefficient $h$ belongs to the class $\mathcal M(\gamma)$, which is strictly larger than the class of regularly varying functions.

\medskip

\begin{enumerate}[label=(\roman*)]
    \item Assume that $p < 1 + \frac{2\ts(\gamma + 1)}{N}$. According to~\cite[Theorem~6]{Migu-2025}, it is enough to show that condition~\eqref{Blow-cond} holds for some $t_0>0$.  
    By Lemma~\ref{Bounds-L}, we obtain
    \begin{equation}
        \label{Blow1}
        (p-1)\, \|e^{t\sL} u_0\|_\infty^{p-1} \int\limits_0^{t} \h(\tau)\, d\tau 
        \;\geq\; C\, t^{-\frac{N(p-1)}{2\ts}} \int\limits_0^t \h(\tau)\, d\tau.
    \end{equation}
    Furthermore, by~\eqref{Cad-M}, there exists a slowly varying function $\ell$ such that, for all sufficiently large $t>0$,
    \begin{equation}
        \label{Blow2}
        \h(t) \gtrsim t^{\gamma}\ell(t).
    \end{equation}
    Hence, for $t$ large enough,
    \begin{equation}
        \label{Blow3}
        \begin{split}
        \int\limits_0^t \h(\tau)\, d\tau 
        \;&\geq\; \int\limits_{t/2}^t \h(\tau)\, d\tau \\
        \;&\gtrsim\;\int\limits_{t/2}^t \tau^{\gamma}\,\ell(\tau)\, d\tau\\
        \;&\gtrsim\;t^{\gamma+1}\int\limits_{1/2}^1 r^{\gamma}\,\ell(t r)\, dr.
        \end{split}
    \end{equation}
    By Theorem~\ref{UC-Thm}, as $t\to\infty$ we have
    \begin{equation}
        \label{Equiv-subcrit}
        \int\limits_{1/2}^1 r^{\gamma}\,\ell(t r)\, dr\;\sim\; \ell(t).
    \end{equation}
    Combining~\eqref{Equiv-subcrit} and~\eqref{Blow3} with~\eqref{Blow1}, we deduce
    \begin{equation}
        \label{Blow4}
        (p-1)\, \|e^{t\sL} u_0\|_\infty^{p-1} \int\limits_0^{t} \h(\tau)\, d\tau 
        \;\gtrsim\; t^{\gamma+1 - \frac{N(p-1)}{2\ts}}\,\ell(t).
    \end{equation}
    Since $p < 1 + \frac{2\ts(\gamma+1)}{N}$, it follows that
    \[
        \gamma+1 - \frac{N(p-1)}{2\ts} > 0.
    \]
    Therefore, in view of~\eqref{Lim-alpha-beta}, one can choose $t_0>0$ sufficiently large so that~\eqref{Blow-cond} is satisfied.  
    This concludes the proof of the first part of Theorem~\ref{Fuj-improve}.

    \item We now assume that $p > 1 + \frac{2\ts(\gamma+1)}{N}$. We will show that condition~\eqref{Glob-cond} holds, which ensures the global existence of solutions.  
    Let $t_0>0$ (to be fixed sufficiently large later). By Lemma~\ref{Bounds-L} and~\eqref{Cad-M}, we obtain
    \begin{equation}
        \label{Glob1}
        \begin{split}
          \int\limits_0^\infty \h(\tau) \,\|e^{\tau \sL} v_0\|_\infty^{\,p-1} \, d\tau
          &\leq \left( \int\limits_0^{t_0} \h(\tau)\, d\tau \right) \|v_0\|_\infty^{\,p-1}
             + C \int\limits_{t_0}^\infty \h(\tau)\, \tau^{-\frac{N(p-1)}{2\ts}} \, d\tau \\
          &\leq \left( \int\limits_0^{t_0} \h(\tau)\, d\tau \right) \|v_0\|_\infty^{\,p-1}
             + C \int\limits_{t_0}^\infty \ell_1(\tau)\, \tau^{\gamma-\frac{N(p-1)}{2\ts}} \, d\tau,
        \end{split}
    \end{equation}
    where $\ell_1$ is a slowly varying function given by Theorem~\ref{Charact-M}.  
    Since $\gamma - \frac{N(p-1)}{2\ts} < -1$ and the function
    \[
        \tau \longmapsto \ell_1(\tau)\, \tau^{\gamma-\frac{N(p-1)}{2\ts}}
    \]
    belongs to the class $\mathcal{RV}_{\gamma-\frac{N(p-1)}{2\ts}}$, Theorem~\ref{RVF-asymp} yields
    \begin{equation}
        \label{Glob2}
        \int\limits_0^\infty \h(\tau) \,\|e^{\tau \sL} v_0\|_\infty^{\,p-1} \, d\tau
        \leq \left( \int\limits_0^{t_0} \h(\tau)\, d\tau \right) \|v_0\|_\infty^{\,p-1}
        + C\, t_0^{\,\gamma+1-\frac{N(p-1)}{2\ts}}.
    \end{equation}
    Finally, since $\gamma+1-\frac{N(p-1)}{2\ts}<0$, the conclusion follows from~\cite[Theorem~6]{Migu-2025} by first choosing $t_0>0$ sufficiently large and then taking $\|v_0\|_\infty$ small enough.
\end{enumerate}

\section{The forced problem}
\label{Forced}
\subsection{Proof of Theorem \ref{Blow-forced}}  
Let $\bw \in C_0(\mathbb{R}^{N}) \cap L^{1}(\mathbb{R}^{N})$ be such that $\int\limits_{\mathbb{R}^N} \bw(x)\,dx > 0$. Suppose further that \eqref{h-form} holds with $\gamma > -1$. We proceed by contradiction, assuming that Problem~\eqref{main} possesses a global weak solution in the sense of Definition~\ref{defn:weak-solution}.
\begin{enumerate}[label=(\roman*)]
\item Here we assume that
$$
    \varrho \leq 0,\quad 0 \leq \frac{b}{1+\gamma} < 2\ts < N,
$$
and that condition~\eqref{Fuji-forced} is satisfied.  
In this setting, we shall employ a test function method, which is commonly used in this context 
(see, e.g., \cite{Beri-arXiv, Kirane0, Kirane1, Kirane2, JKS, MM, Majd, ES}).

Let $\eta, \phi \in C^{\infty}_{0}([0,\infty))$ be cut-off functions such that $0 \leq \eta, \phi \leq 1$ and 
$$
\eta(r) = 
\begin{cases}
1, & \text{if } \tfrac{1}{2} \leq r \leq \tfrac{3}{4}, \\[4pt]
0, & \text{if } r \in [0,\tfrac{1}{4}] \cup [\tfrac{4}{5}, \infty),
\end{cases}
\qquad
\phi(r) = 
\begin{cases}
1, & \text{if } 0 \leq r \leq 1, \\[4pt]
0, & \text{if } r \geq 2.
\end{cases}
$$

For sufficiently large $R > 0$, we define the test function
\begin{equation}
    \label{test-func1}
    \psi_{R}(x,t) = 
    \phi^{\mathbf{m}}\!\left(\tfrac{|x|}{R}\right)\,
    \eta^{\mathbf{m}}\!\left(\tfrac{t}{R^{2\ts}}\right),
    \qquad \text{where } \mathbf{m} = \frac{2p}{p-1} > 2.
\end{equation}

Since $u$ is a global weak solution of \eqref{main} and 
$$
\int\limits_{\R^N} \psi_{R}(x,0)\, u_0(x) \, dx = 0,
$$
the weak formulation \eqref{W-S} implies that

\begin{equation}
\label{W-S-1}
\begin{split}
\int\limits_0^\infty \int\limits_{\mathbb{R}^N} h(t) |x|^{-b} |u|^p \psi_R \, dx \, dt &+ \int\limits_0^\infty \int\limits_{\mathbb{R}^N} t^\varrho \mathbf{w}(x) \psi_R \, dx \, dt \\
&\leq \underbrace{\int\limits_0^\infty \int\limits_{\mathbb{R}^N} |u| |\partial_t \psi_{R}| \, dx \, dt}_{\ci} + \underbrace{\int\limits_0^\infty \int\limits_{\mathbb{R}^N} |u| |\mathscr{L} \psi_{R}| \, dx \, dt}_{\cj}.
\end{split}
\end{equation}
Set $\Omega_R := \{ (x,t) \in \mathbb{R}^N \times (0,\infty) : |x| \leq 2R,\; \frac{R^{2\ts}}{4}\leq\,t \leq \frac{4}{5}R^{2\ts} \}$, which is precisely the support of the test function $\psi_R$. Then, applying the $\epsilon$-Young inequality on this set yields
\begin{align}
    \label{Forc1}
    \ci &\leq \frac{1}{4} \int\limits_0^\infty \int\limits_{\mathbb{R}^N} h(t) |x|^{-b} |u|^p \psi_{R} \, dx \, dt + C \underbrace{ \iint_{\Omega_R} h(t)^{-\frac{1}{p-1}} |x|^{\frac{b}{p-1}} \psi_{R}^{-\frac{1}{p-1}} |\partial_t \psi_{R}|^{\frac{p}{p-1}} \, dx \, dt}_{\ci_1}, \\
    \label{Forc2}
    \cj &\leq \frac{1}{4} \int\limits_0^\infty \int\limits_{\mathbb{R}^N} \h(t) |x|^{-b} |u|^p \psi_{R} \, dx \, dt + C \underbrace{ \iint_{\Omega_R} \h(t)^{-\frac{1}{p-1}} |x|^{\frac{b}{p-1}} \psi_{R}^{-\frac{1}{p-1}} |\mathscr{L} \psi_{R}|^{\frac{p}{p-1}} \, dx \, dt}_{\cj_1}.
\end{align}
Exploiting again the support properties of the cut-off functions $\eta$ and $\phi$, we estimate the term $\ci_1$ as
\begin{equation}
    \label{Forc3}
    \begin{split}
        \ci_1 &\lesssim R^{-\frac{2p\ts}{p-1}}
        \left( \int\limits_{\frac{R^{2\ts}}{4}}^{\frac{4R^{2\ts}}{5}}
            \h(t)^{-\frac{1}{p-1}}
            |\eta'|^{\frac{p}{p-1}}
            |\eta|^{\bm - \frac{p}{p-1}} \, dt \right)
        \left( \int\limits_{\{|x|\leq 2R\}}
            |x|^{\frac{b}{p-1}} \phi^{\bm}(x) \, dx \right) \\
        &\lesssim R^{N+\frac{b-2\ts}{p-1}}
        \left( \int\limits_{\frac{1}{4}}^{\frac{4}{5}}
            \Big( \h(R^{2\ts}\tau) \Big)^{-\frac{1}{p-1}} \, d\tau \right).
    \end{split}
\end{equation}
To estimate the second term $\mathcal{J}_1$ we again use the support properties of $\phi$, the bound $0\le\phi\le 1$, and the scaling behavior of $\Delta$ and $(-\Delta)^{\ts}$.
Set
\[
\Psi(x) := \phi^\bm(|x|), \qquad \Psi_R(x) := \Psi(x/R) = \phi^\bm(|x|/R).
\]
Note that $\Psi$ is independent of $R$. Since $\phi\in C_0^\infty([0,\infty))$ with $\phi= 1$ near the origin and $\bm>2$, the function $\Psi$ belongs to $C^2(\mathbb{R}^N)$ with compact support contained in $\{|x|\le 2\}$.  
A direct computation, together with the scaling identity
\[
(-\Delta)^{\ts}\bigl(f(\cdot/R)\bigr) = R^{-2\ts}\bigl((-\Delta)^{\ts} f\bigr)(\cdot/R),
\]
which follows from the Fourier-multiplier representation of $(-\Delta)^{\ts}$, yields, for every $R>0$,
\begin{equation}\label{scaling-Psi}
\Delta\Psi_R(x) = \frac{1}{R^{2}}(\Delta\Psi)(x/R),
\qquad
(-\Delta)^{\ts}\Psi_R(x) = \frac{1}{R^{2\ts}}\bigl((-\Delta)^{\ts}\Psi\bigr)(x/R).
\end{equation}

We now bound the two factors $\Delta\Psi$ and $(-\Delta)^{\ts}\Psi$ in $L^\infty(\mathbb{R}^N)$.
 Since $\Psi\in C^2(\mathbb{R}^N)$ has compact support, $\Delta\Psi$ is  bounded.

 Moreover, the singular integral representation~\cite{Book-FC, Kwa, Guide}
\[
(-\Delta)^{\ts}\Psi(x) = c_{N,\ts}\,\mathrm{P.V.}\!\int_{\mathbb{R}^N}\frac{\Psi(x)-\Psi(y)}{|x-y|^{N+2\ts}}\,dy
\]
is well defined for every $x\in\mathbb{R}^N$, and one obtains the standard pointwise decay estimate
\[
\bigl|(-\Delta)^{\ts}\Psi(x)\bigr| \lesssim \,(1+|x|)^{-N-2\ts}, \qquad x\in\mathbb{R}^N,
\]
(see, e.g., \cite{Book-FC, Silve}). In particular, $(-\Delta)^{\ts}\Psi\in L^\infty(\mathbb{R}^N)$.

\smallskip
Combining these bounds with \eqref{scaling-Psi}, we deduce that, for every $R\ge 1$,
\[
\bigl|\Delta\phi^\bm(|x|/R)\bigr| \lesssim R^{-2},
\qquad
\bigl|(-\Delta)^{\ts}\phi^\bm(|x|/R)\bigr| \lesssim R^{-2\ts}.
\]

Since $0<\ts<1$, we have $R^{-2}\le R^{-2\ts}$ whenever $R\ge 1$. Consequently,
\begin{equation}\label{Forc5}
\begin{split}
\bigl|\mathcal{L}\phi^\bm(|x|/R)\bigr|&\leq \bigl|\Delta\phi^\bm(|x|/R)\bigr| + \bigl|(-\Delta)^{\ts}\phi^\bm(|x|/R)\bigr|\\
&\lesssim R^{-2\ts} + R^{-2\ts} \\
&\lesssim R^{-2\ts}, \qquad R\ge 1.
\end{split}
\end{equation}
Using \eqref{Forc5} and arguing as in the case of $\ci_1$, we arrive at
\begin{equation}
    \label{Forc6}
    \cj_1 \lesssim R^{N+\frac{b-2\ts}{p-1}}
    \left( \int\limits_{\frac{1}{4}}^{\frac{4}{5}}
        \Big( \h(R^{2\ts}\tau) \Big)^{-\frac{1}{p-1}} \, d\tau \right),
    \qquad R \geq 1.
\end{equation}

Combining \eqref{W-S-1} with \eqref{Forc1}, \eqref{Forc2}, \eqref{Forc3}, and \eqref{Forc6}, we obtain
\begin{equation}
    \label{Forc7}
    \int\limits_0^\infty \int\limits_{\R^N}
        t^\varrho \bw(x) \psi_R(x,t) \, dx \, dt
    \lesssim R^{N+\frac{b-2\ts}{p-1}}
    \left( \int\limits_{\frac{1}{4}}^{\frac{4}{5}}
        \Big( \h(R^{2\ts}\tau) \Big)^{-\frac{1}{p-1}} \, d\tau \right),
    \qquad R \geq 1.
\end{equation}

On the other hand, since $\bw \in L^1$ and $\int\limits \bw(x) \, dx > 0$, Lebesgue’s theorem ensures that, for sufficiently large $R \geq 1$, 
\begin{equation}
    \label{Forc8}
    \int\limits_{\R^N} \bw(x) \,
        \phi^{\bm}\!\left( \tfrac{|x|}{R} \right) dx
    \;\geq\; \tfrac{1}{2} \int\limits_{\R^N} \bw(x) \, dx.
\end{equation}
Therefore, the left-hand side of \eqref{Forc7} can be bounded from below, for sufficiently large $R \geq 1$, as
\begin{equation}
    \label{Forc9}
    \int\limits_0^\infty \int\limits_{\R^N}
        t^\varrho \bw(x) \psi_R(x,t) \, dx \, dt
    \;\gtrsim\; R^{2\ts(\varrho+1)}
    \int\limits_{\R^N} \bw(x) \, dx.
\end{equation}

Plugging \eqref{Forc9} into \eqref{Forc7}, using the expression of $\h(t)$ in \eqref{h-form}, and invoking Proposition~\ref{Asymp-Int}, we infer
\begin{equation}
    \label{Forc10}
    \int\limits_{\R^N} \bw(x) \, dx
    \;\lesssim\; R^{\,N - 2\ts(\varrho+1) + \frac{b-2\ts(1+\gamma)}{p-1}}
    \Big( \ell(R^{2\ts}) \Big)^{-\frac{1}{p-1}}.
\end{equation}

Finally, thanks to Lemma~\ref{Asympt-L} and the fact that
$$
    N - 2\ts(\varrho+1) + \frac{b - 2\ts(1+\gamma)}{p-1} < 0,
$$
we deduce, by letting $R \to \infty$ in \eqref{Forc10}, that
$$
    \int\limits_{\R^N} \bw(x) \, dx \;\leq\; 0,
$$
which is a contradiction. Hence, the proof of the first part of Theorem~\ref{Blow-forced} is complete.
\item Assume now that $\varrho > 0$ and $b, \gamma \geq 0$.  
We adapt the previous approach with a slightly modified test function.  
More precisely, for $R, T > 0$, we replace the test function defined in \eqref{test-func1} by 
\begin{equation}
    \label{test-func2}
    \psi_{R,T}(x,t) = \phi^{\mathbf{m}}\!\left(\tfrac{|x|}{R}\right) 
    \eta^{\mathbf{m}}\!\left(\tfrac{t}{T}\right),
    \qquad \text{where } \mathbf{m} = \frac{2p}{p-1}.
\end{equation}

Proceeding as in the first part of the proof, we obtain, for $R$ sufficiently large,  
\begin{equation}
    \label{Forc-case2}
    \int\limits_{\R^N} \bw(x) \, dx
    \;\lesssim\;
    R^{\,N+\frac{b}{p-1}}
    \left(
        T^{-1-\varrho-\frac{1+\gamma}{p-1}}
        + T^{-\varrho-\frac{\gamma}{p-1}} R^{-\frac{2p\ts}{p-1}}
    \right)
    \big( \ell(T) \big)^{-\frac{1}{p-1}}.
\end{equation}

Since $\varrho > 0$ and $\gamma \geq 0$, Lemma~\ref{Asymp-Int} ensures that, letting $T \to \infty$ in \eqref{Forc-case2}, we obtain 
$$
    \int\limits_{\R^N} \bw(x) \, dx \;\leq\; 0,
$$
which yields a contradiction.  
Thus, the proof of Theorem~\ref{Blow-forced} is completely finished.
\end{enumerate}

\subsection{Proof of Theorem \ref{Global-forced}}

This section is devoted to the proof of Theorem~\ref{Global-forced}, which establishes the global-in-time existence of solutions for sufficiently small initial data and external forcing term.

Throughout this section, we assume that
\[
N\geq 1,\qquad \ts\in(0,1),\qquad -1<\varrho<0,
\qquad -1<\gamma\leq 0,
\]
and
\[
0\leq \frac{b}{1+\gamma}<2\ts<N.
\]

For convenience, we introduce the quantity
\begin{equation}
\label{eq:A}
A:=2\ts(1+\gamma)-b.
\end{equation}

We also recall the critical exponents defined earlier:
\[
p_{c}:=\frac{N(p-1)}{A},
\qquad
q_{c}:=\frac{Np_{c}}{N+2\ts(\varrho+1)p_{c}},
\]
as well as the Fujita-type critical exponent
\[
p^{*}:=
\frac{N-b-2\ts(\varrho-\gamma)}
{N-2\ts(\varrho+1)}.
\]

Finally, recall that the coefficient $\h(t)$ appearing in~\eqref{main} is assumed to be of the form
\[
\h(t)=t^{\gamma}\ell(t),
\]
where $\ell:(0,\infty)\to(0,\infty)$ is a continuous, bounded, and slowly varying function.

To carry out the proof, we will use the following technical lemmas.
\begin{lem}
\label{lem:f-p}
Under the above assumptions and notation, consider the quadratic function
\[
f(p):=2\ts \varrho\, p^{2}
+\bigl(A-N-2\ts\varrho+b\bigr)p
+(N-b).
\]
Then the critical exponent $p^{*}$ satisfies
\[
f(p^{*})<0
\qquad\text{and}\qquad
f'(p^{*})<0.
\]
Consequently,
\[
f(p)<0
\qquad\text{for all } p>p^{*}.
\]
\end{lem}
\begin{proof}[Proof of Lemma~\ref{lem:f-p}]
Set $D := N - 2\ts(\varrho+1)$. Since $0<\varrho+1<1$, we have $2\ts(\varrho+1)<2\ts<N$, hence $D>0$; the hypothesis $b<2\ts(1+\gamma)$ gives $A>0$.

A direct computation yields
\[
\bigl[N - b - 2\ts(\varrho-\gamma)\bigr] - \bigl[N - 2\ts(\varrho+1)\bigr] = 2\ts(1+\gamma)-b = A,
\]
so that
\begin{equation}\label{eq:pstar-1}
p^{*} - 1 = \frac{A}{D}.
\end{equation}
Since $f''\equiv 4\ts\varrho$ is constant, Taylor's formula at $p=1$ is exact: writing $u = p-1$,
\begin{equation}\label{eq:f-expansion}
f(1+u) = 2\ts\varrho\, u^{2} + E u + A,
\qquad
f'(1+u) = 4\ts\varrho\, u + E,
\end{equation}
where $f(1)=A$ and, using $A+b = 2\ts(1+\gamma)$ together with $D = N-2\ts(1+\varrho)$,
\[
E := f'(1) = 2\ts\varrho + (A+b) - N = 2\ts(1+\gamma+\varrho) - N = 2\ts\gamma - D.
\]
Setting $u^{*} = A/D$ in \eqref{eq:f-expansion} and multiplying respectively by $D^{2}$ and $D$, the term $-AD^{2}$ cancels $AD^{2}$ in the first identity below, yielding
\begin{equation}\label{eq:master}
D^{2}\, f(p^{*}) = 2\ts A\bigl(\varrho\, A + \gamma\, D\bigr),
\qquad
D\, f'(p^{*}) = 4\ts\varrho\, A + 2\ts\gamma\, D - D^{2}.
\end{equation}
Since $\varrho<0$, $\gamma\leq 0$, and $A,D>0$, every term on the right of \eqref{eq:master} is non-positive, with $\varrho A<0$ and $-D^{2}<0$ strict. Dividing by the positive factors $D^{2}$ and $D$ gives
\[
f(p^{*}) < 0 \qquad\text{and}\qquad f'(p^{*}) < 0.
\]

For the last assertion, observe that $f'$ is affine with slope $4\ts\varrho<0$, hence strictly decreasing. Therefore $f'(p) < f'(p^{*}) < 0$ for all $p>p^{*}$, so $f$ is strictly decreasing on $[p^{*},\infty)$, and consequently $f(p) < f(p^{*}) < 0$ for every $p > p^{*}$.
\end{proof}

\begin{lem}\label{Choice-r}
Under the above assumptions and notation, there exists an exponent
$r\in(1,\infty)$ such that
\begin{equation}\label{eq:range-r}
\begin{aligned}
\max\!\left\{
\frac{1}{p_{c}}+\frac{2\varrho\ts}{N},
\frac{2\ts(\gamma+1)-bp}{Np(p-1)}
\right\}
<
\frac{1}{r}
<
\min\!\left\{
\frac{1}{p_{c}},
\frac{N-b}{Np},
\frac{2\ts-b}{N(p-1)}
\right\}.
\end{aligned}
\end{equation}

We then define
\begin{equation}\label{mu-def}
\mu:=
\frac{N}{2\ts}\!\left(\frac{1}{p_{c}}-\frac{1}{r}\right)
=
\frac{A}{2\ts(p-1)}-\frac{N}{2\ts r},
\end{equation}
\begin{equation}\label{beta-def}
\beta:=
\frac{N}{2\ts}\!\left(\frac{1}{q_{c}}-\frac{1}{r}\right),
\end{equation}
and
\begin{equation}\label{delta-def}
\delta:=
\frac{N(p-1)}{2r\ts}+\frac{b}{2\ts}.
\end{equation}

The parameters $\mu$, $\beta$, and $\delta$ satisfy
\begin{equation}\label{Condition-intg}
0<\mu<\frac{\gamma+1}{p},
\qquad
0<\beta<1,
\qquad
0<\delta<1,
\end{equation}
and, in addition,
\begin{equation}\label{Algebric-expr}
1+\gamma-p\mu-\delta
=
-\mu
=
\varrho+1-\beta.
\end{equation}
\end{lem}
\begin{proof}[Proof of Lemma~\ref{Choice-r}]
We work under the standing assumption $p>p^{*}$, so that Lemma~\ref{lem:f-p} yields $f(p)<0$. Since $D:=N-2\ts(\varrho+1)<N$, the Fujita exponent satisfies $p^{*}=1+A/D>1+A/N$, and therefore
\begin{equation}\label{eq:pc-gt-1}
p_{c}=\frac{N(p-1)}{A}>1.
\end{equation}

\textit{\underline{Non-emptiness of \eqref{eq:range-r}.}}\quad Set
\[
L_{1}:=\tfrac{1}{p_{c}}+\tfrac{2\varrho\ts}{N},\;\;
L_{2}:=\tfrac{2\ts(\gamma+1)-bp}{Np(p-1)},\;\;
U_{1}:=\tfrac{1}{p_{c}},\;\;
U_{2}:=\tfrac{N-b}{Np},\;\;
U_{3}:=\tfrac{2\ts-b}{N(p-1)}.
\]
Using $A+b=2\ts(1+\gamma)$, a direct computation yields, for each pair $(i,j)\in\{1,2\}\times\{1,2,3\}$,
\begin{align*}
N(U_{1}-L_{1}) &= -2\varrho\ts,\\
Np(p-1)(U_{1}-L_{2}) &= (A+b)(p-1),\\
Np(p-1)(U_{2}-L_{1}) &= -f(p),\\
N(p-1)(U_{3}-L_{1}) &= -2\ts\bigl(\gamma+\varrho(p-1)\bigr),\\
Np(p-1)(U_{2}-L_{2}) &= N(p-1)-A = A(p_{c}-1),\\
Np(p-1)(U_{3}-L_{2}) &= 2\ts\bigl(p-(1+\gamma)\bigr).
\end{align*}
The right-hand sides are all strictly positive: the first from $\varrho<0$; the second from $A+b>0$ and $p>1$; the third from Lemma~\ref{lem:f-p}; the fourth from $\gamma\leq 0$, $\varrho<0$, $p>1$; the fifth from \eqref{eq:pc-gt-1}; and the sixth from $p>1\geq 1+\gamma$. Hence $\max\{L_{1},L_{2}\}<\min\{U_{1},U_{2},U_{3}\}$. Furthermore, $\min\{U_{1},U_{2},U_{3}\}>0$ since $0\leq b<2\ts<N$ and $p_{c}>0$, while $\max\{L_{1},L_{2}\}<U_{1}=1/p_{c}<1$ by \eqref{eq:pc-gt-1}. We may therefore choose
\[
\frac{1}{r}\;\in\;\bigl(\max\{L_{1},L_{2}\},\,\min\{U_{1},U_{2},U_{3}\}\bigr)\cap(0,1),
\]
which is non-empty. The resulting $r$ lies in $(1,\infty)$ and satisfies \eqref{eq:range-r}.

\textit{\underline{Verification of \eqref{Condition-intg}.}}\quad From the definition of $q_{c}$ we have $1/q_{c}=1/p_{c}+2\ts(\varrho+1)/N$, which combined with \eqref{mu-def}--\eqref{beta-def} gives
\begin{equation}\label{eq:beta-mu-rel}
\beta\;=\;\mu+(\varrho+1).
\end{equation}
Moreover, using $A+b=2\ts(1+\gamma)$,
\[
\frac{1}{p_{c}}-\frac{2\ts(\gamma+1)}{Np}
=\frac{Ap-(A+b)(p-1)}{Np(p-1)}
=\frac{2\ts(\gamma+1)-bp}{Np(p-1)}=L_{2}.
\]
The definitions \eqref{mu-def}, \eqref{delta-def} together with \eqref{eq:beta-mu-rel} then yield the equivalences
\[
\mu>0\Leftrightarrow \tfrac{1}{r}<U_{1},\quad
\mu<\tfrac{\gamma+1}{p}\Leftrightarrow \tfrac{1}{r}>L_{2},\quad
\beta<1\Leftrightarrow\mu<-\varrho\Leftrightarrow \tfrac{1}{r}>L_{1},\quad
\delta<1\Leftrightarrow \tfrac{1}{r}<U_{3},
\]
each of which holds by the choice of $r$. Finally, $\beta>0$ because $\mu>0$ and $\varrho+1>0$, while $\delta>0$ because $p>1$, $r>0$, and $b\geq 0$. This proves \eqref{Condition-intg}.

\textit{\underline{Verification of \eqref{Algebric-expr}.}}\quad The relation \eqref{eq:beta-mu-rel} gives directly $\varrho+1-\beta=-\mu$. Using \eqref{mu-def}, \eqref{delta-def} and once more $A+b=2\ts(1+\gamma)$,
\begin{align*}
(p-1)\mu+\delta
&=(p-1)\!\left[\frac{A}{2\ts(p-1)}-\frac{N}{2\ts r}\right]+\frac{N(p-1)}{2r\ts}+\frac{b}{2\ts}\\
&=\frac{A}{2\ts}+\frac{b}{2\ts}=\frac{A+b}{2\ts}=1+\gamma,
\end{align*}
i.e., $1+\gamma-p\mu-\delta=-\mu$. Combining the above identities, we obtain \eqref{Algebric-expr}. 
The proof of Lemma~\ref{Choice-r} is therefore complete.
\end{proof}
\medskip

We are now ready to complete the proof of Theorem~\ref{Global-forced}. 
The argument is based on a fixed-point scheme in a suitable weighted Lebesgue space, following ideas developed in~\cite{Caz, JKS, Majd, Beri-arXiv}. 
More precisely, we seek a solution as a fixed point of the Duhamel operator
\begin{equation}\label{eq:Phi-def}
\Phi(u)(t) := e^{t\sL}u_{0} + \Phi_{1}(u)(t) + \Phi_{2}(t),
\end{equation}
where
\[
\Phi_{1}(u)(t) := \int_{0}^{t}\!\h(\sigma)\, e^{(t-\sigma)\sL}\bigl(|\cdot|^{-b}|u(\sigma)|^{p}\bigr)\, d\sigma,
\qquad
\Phi_{2}(t) := \int_{0}^{t}\!\sigma^{\varrho}\, e^{(t-\sigma)\sL}\mathbf{w}\,d\sigma.
\]

We work in a weighted $L^{r}$-framework. Let $r$ and $\mu$ be given by \eqref{eq:range-r} and \eqref{mu-def}, respectively. For a fixed $\varepsilon > 0$, define
\[
E := \left\{  u \in L^{\infty}\bigl((0,\infty); L^{r}(\mathbb{R}^{N})\bigr)\;:\; \|u\|_{E} := \sup_{t>0} \, t^{\mu}\|u(t)\|_{r} \leq \varepsilon \right\},
\]
and equip $E$ with the metric
\[
d(u,v) := \sup_{t>0} \, t^{\mu}\|u(t)-v(t)\|_{r}.
\]
It is standard that $(E,d)$ is a complete metric space.
\medskip

We first estimate the linear and forcing contributions. Applying the smoothing estimate from Lemma~\ref{lem:smoothing} with $(r,q) = (p_c,r)$ for the homogeneous term and $(r,q) = (q_c,r)$ for the forcing term, and using \eqref{mu-def}, \eqref{beta-def}, \eqref{eq:range-r}, and the algebraic identity \eqref{Algebric-expr}, we obtain
\begin{equation} \label{Est-LF}
\begin{aligned}
\|e^{t\mathcal{L}}u_{0}+\Phi_2(t)\|_{r}
&\le C\,t^{-\mu}\|u_{0}\|_{p_{c}}
   +C\|\mathbf{w}\|_{q_c}\int_0^t\sigma^{\varrho}(t-\sigma)^{-\beta}\, d\sigma \\
&\le  C\,t^{-\mu}\|u_{0}\|_{p_{c}}
   +Ct^{-\mu}\mathscr{B}\left(\varrho+1,1-\beta\right)\|\mathbf{w}\|_{q_c},
\qquad t>0,
\end{aligned}
\end{equation}
where $\mathscr{B}$ denotes the classical beta function.

Next, let $u \in E$ and fix $t>0$. Applying Lemma~\ref{lem:weighted-smoothing} with $(q_1,q_2)=(r/p,r)$, together with the bounds
\[
\|u(\sigma)\|_{r} \le \varepsilon \sigma^{-\mu},
\qquad
h(\sigma)=\sigma^{\gamma}\ell(\sigma),
\qquad
\ell(\sigma)\le L_0,
\]
we infer from Lemma~\ref{Choice-r} that
\begin{equation} \label{Est-NL}
\begin{aligned}
\|\Phi_{1}(u)(t)\|_{r}
&\leq C L_{0}\varepsilon^{p}\int_{0}^{t}\sigma^{\gamma-p\mu}(t-\sigma)^{-\delta}\,\mathrm{d}\sigma \\
&\leq C\,L_{0}\,\varepsilon^{p}t^{-\mu}
\mathscr{B}\left(\delta-\mu,1-\delta\right),
\end{aligned}
\end{equation}
where $\delta$ is defined in \eqref{delta-def}. 
The integral above is finite due to the admissibility condition on $r$ in \eqref{eq:range-r} together with \eqref{Condition-intg}.

Combining \eqref{Est-LF} and \eqref{Est-NL}, and using the smallness assumption \eqref{Small-GE}, we deduce that for every $u \in E$,
\begin{equation}\label{eq:PhiE}
\begin{split}
\|\Phi(u)\|_{E}
&\leq C\left(\|u_{0}\|_{p_{c}} +\|\mathbf{w}\|_{q_{c}}+ \varepsilon^{p}\right)\\
&\leq C\left(\varepsilon+\varepsilon^{p}\right)\\
&\leq \varepsilon,
\end{split}
\end{equation}
provided $\varepsilon>0$ is sufficiently small. Hence, $\Phi$ maps $E$ into itself.

It remains to prove that $\Phi$ is contractive. Let $u,v \in E$. Using the estimate
$$
\left||u|^{-1}u-|v|^{p-1}v\right|\,\lesssim\,|u-v|\left(|u|^{-1}+|v|^{-1}\right),
$$
together with the same weighted smoothing argument for the nonlinear term and the boundedness of $\ell$, we obtain
\begin{equation}\label{eq:contrFinal}
d(\Phi(u), \Phi(v)) \leq C\,\varepsilon^{p-1}\,d(u,v).
\end{equation}
Choosing $\varepsilon>0$ sufficiently small so that
\[
C\,\varepsilon^{p-1}\le \frac12,
\]
it follows that $\Phi$ is a strict contraction on $E$. Banach's fixed-point theorem therefore yields a unique fixed point $u \in E$. This fixed point is precisely the desired global-in-time mild solution of \eqref{main}.

The proof of Theorem~\ref{Global-forced} is now complete.

\section{Conclusion and Open Problems}
\label{Conc-sec}
In this paper, we analyzed the Cauchy problem for a semilinear parabolic equation involving a 
mixed local–nonlocal diffusion operator, a time-dependent coefficient $\h(t)$ taken from the 
generalized class of regularly varying functions, and an external forcing term. Our contributions can be summarized as follows. For the unforced problem, we established sharp conditions for finite-time blow-up and global existence, thereby extending the classical Fujita theory to the larger class of regularly varying functions. This provides a unified framework that recovers several earlier results as particular cases. For the forced problem, we proved nonexistence  of global weak solutions under natural assumptions on the parameters and the external source $\bw$. At the same time, we derived sufficient smallness conditions on both the initial data and the forcing term that guarantee the existence of global mild solutions. Our analysis combines semigroup estimates for the mixed local–nonlocal operator, test function techniques, and asymptotic properties of regularly varying functions, highlighting the interplay between diffusion mechanisms, temporal weights, and external forcing.

Despite these advances, several questions remain open and deserve further investigation. The long-time asymptotics of global solutions, such as decay rates, self-similar behavior, or convergence toward stationary states, remain largely unexplored in the present setting. Our approach could also be adapted to equations with gradient-type nonlinearities, coupled equation systems, or boundary value problems in bounded domains, where competition between local and nonlocal effects may lead to new phenomena. Finally, since the operator $\L=\Delta-(-\Delta)^\ts$ has deep connections with stochastic processes, it would be interesting to develop a probabilistic framework for our results, possibly linking blow-up behavior with properties of underlying Lévy-type processes.

\appendix \section{Regularly varying functions} \label{appendix1}
For the sake of completeness, we present a brief overview of the principal properties of \emph{regularly varying functions}.

The foundational results originate in Karamata's seminal work \cite{Karama1} and de Haan's thesis \cite{Haan}. However, for the convenience of the reader, we refer primarily to the more accessible treatments available in the comprehensive monographs \cite{Bing-Book, Geluk, Haan-Book, Seneta-Book}, where these properties are systematically developed and rigorously presented.

\begin{defi}
\label{RVF}
A measurable function $\L : \mathbb{R}_+ \to \mathbb{R}_+$ is said to be \emph{regularly varying at infinity} with index $\rho \in \mathbb{R}$ if  
\begin{equation}
    \label{RV-rho}
    \lim_{\lambda \to \infty} \frac{\L(\lambda\,x)}{\L(\lambda)} = x^\rho \quad \text{for every } x > 0.
\end{equation}
We denote this by $\L \in \mathcal{RV}_\rho$. In the special case $\rho = 0$, the function $\L$ is called \emph{slowly varying at infinity} (in the sense of Karamata). More precisely, $\L$ is \emph{slowly varying at infinity} if
\begin{equation}
    \label{SVF}
    \frac{\ell (\lambda\, x)}{\ell (\lambda)} \to 1 \quad \text{as } \lambda \to \infty, \quad \text{for all } x> 0. 
\end{equation}
\end{defi}
\begin{rem}\rm 
    ~\begin{enumerate}[label=(\roman*)]
        \item The condition \eqref{SVF} captures the idea that $\ell $ varies very gradually at infinity.
\item Slowly varying functions were first introduced by Karamata in \cite{Karama1, Karama2}.
\item If $ \ell \in C^1 $ near infinity, a sufficient condition for \eqref{SVF} to hold is
\begin{equation}
    \label{SVFF}
    \lim_{x \to \infty} \frac{x\ell'(x)}{\ell(x)} = 0.
\end{equation}

\end{enumerate}
\end{rem}
One of the foundational results in the theory of \emph{regularly varying functions} is the \emph{Uniform Convergence Theorem (UCT)}. First proved by Karamata in the continuous case and later extended to the measurable setting by Korevaar and collaborators in 1949. Given its importance, we state the theorem precisely below. For several proofs, see \cite[Theorem~1.2.1, p.~6]{Bing-Book}.
\begin{thm}
    \label{UC-Thm}
If $\L \in \mathcal{RV}_\rho$, then for arbitrarily chosen $a$ and $b$, where $0 < a < b < \infty$, the equality \eqref{RV-rho} holds uniformly for $x \in [a, b]$.
\end{thm}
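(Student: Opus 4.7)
The plan is to reduce the theorem to the slowly varying case, translate the multiplicative asymptotics into additive ones via a logarithmic substitution, and then promote pointwise convergence to uniform convergence through a Steinhaus-type measure-theoretic argument that exploits the measurability of $\L$.

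First I would write $\L(x) = x^\rho\, m(x)$ with $m(x) := \L(x)/x^\rho$; one checks immediately that $m \in \mathcal{RV}_0$. Since $x \mapsto x^\rho$ is uniformly continuous on the compact interval $[a,b] \subset (0,\infty)$, the factorization $\L(\lambda x)/\L(\lambda) = x^\rho \cdot m(\lambda x)/m(\lambda)$ shows that the uniform-convergence question reduces to the case $\rho = 0$. Assuming $m > 0$ on a tail interval, I would then introduce the measurable function $h(t) := \log m(e^t)$, so that the slow-variation hypothesis becomes
\begin{equation*}
\varphi_\tau(u) := h(\tau + u) - h(\tau) \longrightarrow 0 \quad \text{as } \tau \to \infty, \quad \text{for each fixed } u \in \R.
\end{equation*}
Uniform convergence on $x \in [a,b]$ for $m$ then becomes uniform convergence of $\varphi_\tau(u) \to 0$ on the compact set $I := [\log a, \log b]$.

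For the main step, I would argue by contradiction: suppose there exist $\epsilon > 0$, $\tau_n \uparrow \infty$, and $u_n \in I$ with $|\varphi_{\tau_n}(u_n)| > 3\epsilon$. For each $N \in \N$ define the measurable set
\begin{equation*}
E_N := \bigl\{ v \in \R : |\varphi_{\tau_n}(v)| \leq \epsilon \text{ and } |\varphi_{\tau_n + u_n}(-v)| \leq \epsilon \text{ for all } n \geq N \bigr\}.
\end{equation*}
The $E_N$ are increasing, and by the pointwise hypothesis $\bigcup_N E_N = \R$ up to a null set. Fixing a sufficiently large bounded open $J \supset I$, one finds $N_0$ such that $|E_{N_0} \cap J|$ exceeds $\tfrac{2}{3}|J|$. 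A Steinhaus-type argument then ensures that for every $u_n \in I$ the intersection $E_{N_0} \cap (u_n - E_{N_0})$ is nonempty, so we may pick $v_n \in E_{N_0}$ with $u_n - v_n \in E_{N_0}$. The telescoping identity
\begin{equation*}
\varphi_{\tau_n}(u_n) = \varphi_{\tau_n}(v_n) - \varphi_{\tau_n + u_n}\bigl(-(u_n - v_n)\bigr)
\end{equation*}
then bounds $|\varphi_{\tau_n}(u_n)|$ by $2\epsilon$ for $n \geq N_0$, contradicting the choice of $u_n$.

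Undoing the logarithmic substitution yields uniform convergence of $m(\lambda x)/m(\lambda) \to 1$ on $[a,b]$, and multiplying by $x^\rho$ recovers the full statement. The main obstacle is the measure-theoretic step: one must verify measurability of the sets $E_N$ and tune the Steinhaus difference theorem so that, for every $u_n$ in the compact set $I$, a genuine split $u_n = v_n + (u_n - v_n)$ with both summands in $E_{N_0}$ can be achieved by choosing $J$ large relative to $|I|$. Measurability of $\L$ is essential here, as classical axiom-of-choice constructions yield regularly varying non-measurable objects for which the UCT fails.
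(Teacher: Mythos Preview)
The paper does not supply its own proof of Theorem~\ref{UC-Thm}; it merely records the statement and refers the reader to \cite[Theorem~1.2.1, p.~6]{Bing-Book} for ``several proofs.'' Your argument is precisely one of those classical proofs: the reduction to $\rho=0$ via $\L(x)=x^\rho m(x)$, the additive reformulation $h(t)=\log m(e^t)$, and the Steinhaus/Csisz\'ar--Erd\H{o}s contradiction using the exhausting sets $E_N$ and a telescoping identity all appear in \cite{Bing-Book} (and in Seneta~\cite{Seneta-Book}). So there is no genuine methodological divergence to discuss; you have reconstructed the standard route that the paper implicitly invokes.

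Two small points worth tightening if you flesh this out. First, $\bigcup_N E_N=\R$ holds exactly, not merely up to a null set: for each fixed $v$ the pointwise hypothesis applied at $v$ and at $-v$ (along the sequences $\tau_n$ and $\tau_n+u_n$, both tending to $\infty$) forces $v\in E_N$ for large $N$. Second, the Steinhaus step does require a quantitative choice of $J$: to guarantee $E_{N_0}\cap(u-E_{N_0})\neq\emptyset$ for \emph{every} $u\in I$, one typically takes $J$ to be an interval of length at least twice that of $I$ and requires $|E_{N_0}\cap J|>\tfrac12(|J|+|I|)$, so that for each $u\in I$ the translate $u-E_{N_0}$ and $E_{N_0}$ both occupy more than half of a common interval. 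Your ``$\tfrac23|J|$'' threshold works once $|J|\geq 3|I|$, but the pairing of constants should be made explicit.
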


Another fundamental result concerning slowly varying functions is their \textit{representation theorem}, which plays a crucial role in various areas of analysis.
\begin{thm}[{\cite[Theorem 1.3.1, p. 12]{Bing-Book}}]
\label{Repres-thm}
A measurable function $\ell $ is slowly varying if and only if it can be expressed in the form
\begin{equation}
\label{Repres-L}
    \ell (x) = c(x) \exp\left\{ \int\limits_a^x \frac{\varepsilon(t)}{t} \, dt \right\} \quad (x \ge a),
\end{equation}
for some constant $a > 0$, where $c(\cdot)$ is measurable with $c(x) \to c \in (0, \infty)$ and $\varepsilon(x) \to 0$ as $x \to \infty$.
\end{thm}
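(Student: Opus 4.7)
The plan is to establish both implications of the characterization separately, with the forward direction (representation $\Rightarrow$ slow variation) reducing to a short calculation and the converse requiring the Uniform Convergence Theorem (Theorem~\ref{UC-Thm}) together with a smoothing construction.

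For the forward direction, assume $\ell$ is given by \eqref{Repres-L} and compute
\[
\frac{\ell(\lambda x)}{\ell(\lambda)} \;=\; \frac{c(\lambda x)}{c(\lambda)}\,\exp\!\left\{\int_{\lambda}^{\lambda x}\frac{\varepsilon(t)}{t}\,dt\right\} \;=\; \frac{c(\lambda x)}{c(\lambda)}\,\exp\!\left\{\int_{1}^{x}\frac{\varepsilon(\lambda u)}{u}\,du\right\},
\]
where the substitution $t = \lambda u$ has been used. The prefactor tends to $1$ since $c(\cdot)\to c\in(0,\infty)$, while the integrand is eventually bounded and pointwise tending to $0$, so dominated convergence yields $\ell(\lambda x)/\ell(\lambda) \to 1$.

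For the converse, I linearize by setting $\eta(y) := \log \ell(e^y)$, so that \eqref{SVF} becomes $\eta(y+u) - \eta(y) \to 0$ as $y \to \infty$ for each $u\in\R$. Theorem~\ref{UC-Thm}, transferred through the exponential/logarithm, strengthens this into uniform convergence for $u$ in compact subsets of $\R$. I then smooth $\eta$ by setting $\eta_1(y) := \int_y^{y+1} \eta(s)\,ds$. The uniform asymptotic yields $\eta_1(y) - \eta(y) \to 0$, while the elementary identity $\eta_1'(y) = \eta(y+1) - \eta(y)$ (valid a.e.) gives $\eta_1'(y) \to 0$. Defining $\varepsilon(x) := \eta_1'(\log x)$ and $c(x) := \exp\{\eta(\log x) - \eta_1(\log x) + \eta_1(\log a)\}$, the substitution $t = e^s$ transforms
\[
\eta_1(\log x) - \eta_1(\log a) \;=\; \int_{\log a}^{\log x} \eta_1'(s)\,ds \;=\; \int_a^x \frac{\varepsilon(t)}{t}\,dt,
\]
which produces exactly the representation \eqref{Repres-L}, with $\varepsilon(x) \to 0$ and $c(x) \to e^{\eta_1(\log a)} \in (0,\infty)$ as $x \to \infty$.

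The main obstacle is the converse implication, and more precisely the passage from pointwise to uniform asymptotics. Only with uniform control on compact $u$-intervals can one assert simultaneously that $\eta_1$ stays close to $\eta$ (so that the discrepancy may be absorbed into the factor $c$) and has vanishing derivative (providing the infinitesimal $\varepsilon$). This is exactly what the measurability hypothesis on $\ell$ buys through Theorem~\ref{UC-Thm}; without measurability, pathological counterexamples constructed via a Hamel basis for $(\R,+)$ show that the representation may genuinely fail, so measurability is essential rather than technical.
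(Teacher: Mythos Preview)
The paper does not supply a proof of this theorem: it is quoted in Appendix~\ref{appendix1} as a classical result, with an explicit citation to \cite[Theorem~1.3.1, p.~12]{Bing-Book}, and is used only as background. There is therefore no ``paper's own proof'' to compare against.

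That said, your argument is correct and is essentially the standard proof one finds in \cite{Bing-Book} or \cite{Seneta-Book}. The forward direction is the routine computation you give. For the converse, the linearization $\eta(y)=\log\ell(e^y)$, the appeal to the Uniform Convergence Theorem to upgrade the pointwise statement $\eta(y+u)-\eta(y)\to 0$ to a uniform one on compact $u$-intervals, and the smoothing $\eta_1(y)=\int_y^{y+1}\eta(s)\,ds$ are exactly the classical devices. Two small technical points are worth making explicit: first, one needs $\eta$ to be locally integrable on some half-line $[Y,\infty)$ before $\eta_1$ is even defined, and this follows from the uniform convergence (for $y$ large, $|\eta(y+u)-\eta(y)|\le 1$ for all $u\in[0,1]$, so $\eta$ is locally bounded there); second, since $\eta_1'(y)=\eta(y+1)-\eta(y)$ holds only almost everywhere, it is cleaner to \emph{define} $\varepsilon(x):=\eta(\log x+1)-\eta(\log x)$ for all $x$, which is measurable, tends to $0$ everywhere, and for which the identity $\int_a^x \varepsilon(t)/t\,dt=\eta_1(\log x)-\eta_1(\log a)$ is verified by a direct change of variables. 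With those adjustments the argument is complete. Your closing remark on the necessity of measurability (via Hamel-basis counterexamples) is also accurate.
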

\begin{rem}
\leavevmode
\rm 
\begin{enumerate}[label=(\roman*)]
\item Since $\ell $, $c$, and $\varepsilon$ may be modified freely on bounded intervals, the specific choice of the lower limit $a$ is not essential. For example, one may take $a = 1$, or even $a = 0$ by requiring $\varepsilon \equiv 0$ near the origin to ensure convergence of the integral. Moreover, the function $c$ can always be chosen to eventually be bounded.
\item The representation \eqref{Repres-L} can be equivalently rewritten in the form
\begin{equation}
\label{Repres-LL}
    \ell (x) = \exp\left\{ c_1(x) + \int\limits_a^x \frac{\varepsilon(t)}{t} \, dt \right\},
\end{equation}
where $c_1(x)$ and $\varepsilon(x)$ are bounded measurable functions such that $c_1(x) \to d \in \mathbb{R}$ and $\varepsilon(x) \to 0$ as $x \to \infty$.
\end{enumerate}
\end{rem}
The representation formula \eqref{Repres-LL} immediately yields the following asymptotic result. A proof can be found, for example, in \cite{Book-2024}.

\begin{lem}
    \label{Asympt-L}
  Let $\ell $ be a slowly varying function, $\alpha<0$ and $\beta \in\R$. Then 
  \begin{equation}
      \label{Lim-alpha-beta}
      x^{\alpha} \left(\ell (x)\right)^{\beta}\to 0\quad \text{as}\quad x\to \infty.  \end{equation}
\end{lem}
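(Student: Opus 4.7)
The plan is to derive the result directly from the Karamata representation \eqref{Repres-LL} for slowly varying functions, using the fact that $\ell$ grows (or decays) slower than any power of $x$ at infinity.

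First I would write, by Theorem~\ref{Repres-thm}, the function $\ell$ in the form
\[
\ell(x)=\exp\!\left\{c_1(x)+\int_a^x\frac{\varepsilon(t)}{t}\,dt\right\},
\]
where $c_1$ is bounded and $\varepsilon(t)\to 0$ as $t\to\infty$. Raising to the $\beta$-th power and taking logarithms gives
\[
\beta\log\ell(x)=\beta\,c_1(x)+\beta\int_a^x\frac{\varepsilon(t)}{t}\,dt.
\]

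Next, given the target exponent $\alpha<0$, I would fix $\delta>0$ small enough that $\alpha+|\beta|\delta<0$; this is possible since $\alpha<0$ strictly. Since $\varepsilon(t)\to 0$, there exists $T\geq a$ such that $|\varepsilon(t)|\leq \delta$ for all $t\geq T$. Splitting the integral at $T$ and using boundedness of $c_1$, I would bound
\[
|\beta\log\ell(x)|\;\leq\;C+|\beta|\delta\log(x/T),\qquad x\geq T,
\]
for a constant $C>0$ independent of $x$. Exponentiating yields the Potter-type bound
\[
(\ell(x))^{\beta}\;\leq\;C'\,x^{|\beta|\delta},\qquad x\geq T.
\]

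Multiplying by $x^{\alpha}$ then gives $x^{\alpha}(\ell(x))^{\beta}\leq C'\,x^{\alpha+|\beta|\delta}$, and since $\alpha+|\beta|\delta<0$ the right-hand side tends to $0$ as $x\to\infty$, which is the claim.

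The only subtle point is the sign handling when $\beta<0$: one must bound $\beta\log\ell(x)$ from above rather than in absolute value, but this is immediate by the same splitting since the integrand is controlled by $\delta/t$ in modulus. Beyond that the argument is routine; the substantive content is entirely contained in the representation theorem \eqref{Repres-LL}, so there is no real obstacle to overcome once that tool is invoked.
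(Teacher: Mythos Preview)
Your argument is correct and follows exactly the route the paper indicates: the authors do not write out a proof but simply state that the lemma ``immediately'' follows from the representation formula \eqref{Repres-LL} and refer to \cite{Book-2024}. Your derivation of the Potter-type bound $(\ell(x))^{\beta}\leq C' x^{|\beta|\delta}$ from that representation, followed by the choice $\alpha+|\beta|\delta<0$, is the standard way to make this ``immediate'' step explicit; the case $\beta=0$ is trivial and your absolute-value bound already covers both signs of $\beta$, so the aside about $\beta<0$ is unnecessary.
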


\begin{thm}[Karamata’s theorem for regularly varying functions {\cite{Karama1, Haan}}]\quad\\
\label{RVF-asymp}
Let $\L : \mathbb{R}_+ \to \mathbb{R}_+$ be a Lebesgue integrable function on every finite interval.

\begin{enumerate}[label=(\roman*)]
    \item Suppose $\rho \geq -1$ and $\L \in \mathcal{RV}_\rho$. Then
    $$
   x\longmapsto \int\limits_0^x \L(t) \, dt \in \mathcal{RV}_{\rho+1},
    $$
    and
    \begin{equation}
        \label{Equiv-0-x}
        \lim_{x \to \infty} \left(\frac{x \L(x)}{\int\limits_0^x \L(t) \, dt}\right) = \rho + 1.
    \end{equation}

    \item Suppose $\rho < -1$ and $\L \in \mathcal{RV}_\rho$. Then the tail integral
    $$
    \int\limits_x^\infty \L(t) \, dt < \infty,
    $$
    and satisfies
    $$
    \int\limits_x^\infty \L(t) \, dt \in \mathcal{RV}_{\rho+1},
    $$
    together with the asymptotic relation
    \begin{equation}
        \label{Equiv-x-infty}
        \lim_{x \to \infty} \left(\frac{x \L(x)}{\int\limits_x^\infty \L(t) \, dt}\right) = -\rho - 1.
    \end{equation}
    \item Suppose $\rho = -1$ and $\int\limits_x^\infty \L(t) \, dt < \infty$. Then the asymptotic identity \eqref{Equiv-x-infty} also holds.
\end{enumerate}
\end{thm}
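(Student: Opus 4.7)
The plan is to prove Karamata's integral theorem by writing $\L(x) = x^\rho L(x)$ with $L$ slowly varying (which follows from the definition of $\mathcal{RV}_\rho$), rescaling the integrals by the substitution $t = sx$, and then invoking the Uniform Convergence Theorem (Theorem~\ref{UC-Thm}) together with the representation formula \eqref{Repres-LL} to produce Potter-type dominating bounds.

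For part (i) with $\rho > -1$, the substitution $t = sx$ gives
\[
\frac{\int_0^x \L(t)\,dt}{x\,\L(x)} \;=\; \int_0^1 s^\rho\, \frac{L(sx)}{L(x)}\, ds.
\]
By Theorem~\ref{UC-Thm}, the integrand converges pointwise, and uniformly on $[\delta, 1]$ for any $\delta > 0$, to $s^\rho$ as $x \to \infty$. To control the behavior near $s = 0$, I would extract from \eqref{Repres-LL} the Potter-type bound: for any $\epsilon > 0$ there exists $X$ such that $L(sx)/L(x) \leq C\, s^{-\epsilon}$ whenever $0 < s \leq 1$ and $x \geq X$. Choosing $\epsilon < \rho+1$ furnishes an integrable dominant $s^{\rho-\epsilon}$, and dominated convergence yields the limit $1/(\rho+1)$, which is \eqref{Equiv-0-x}. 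The same substitution applied on $[1,\lambda]$ then gives $\int_0^x \L(t)\,dt \in \mathcal{RV}_{\rho+1}$.

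Part (ii) proceeds symmetrically: first use the Potter bound with $\epsilon < -\rho-1$ to secure finiteness of $\int_x^\infty \L(t)\,dt$ for $x$ large, then rescale to obtain
\[
\frac{\int_x^\infty \L(t)\,dt}{x\,\L(x)} \;=\; \int_1^\infty s^\rho\, \frac{L(sx)}{L(x)}\, ds,
\]
and apply dominated convergence on $[1,\infty)$ to reach \eqref{Equiv-x-infty} together with regular variation of the tail.

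The hardest part will be the boundary cases $\rho = -1$ in (i) and (iii), where the limiting integrals $\int_0^1 s^{-1}\,ds$ and $\int_1^\infty s^{-1}\,ds$ diverge and the dominated-convergence route fails. For these I would argue indirectly by applying UCT on a finite strip $[x,\lambda x]$: in (iii), writing $G(x) := \int_x^\infty \L(t)\,dt$ and using $\L(t) = L(t)/t$,
\[
G(x) - G(\lambda x) \;=\; \int_x^{\lambda x}\frac{L(t)}{t}\,dt \;\sim\; L(x)\log\lambda \qquad (x\to\infty),
\]
so that $1 - G(\lambda x)/G(x) \sim (L(x)/G(x))\,\log\lambda$. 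If $L(x)/G(x) = x\,\L(x)/G(x)$ did not tend to zero along some subsequence, comparing two distinct values of $\lambda > 1$ and exploiting the multiplicative consistency $G(\lambda_1 \lambda_2 x)/G(x) = (G(\lambda_1 \lambda_2 x)/G(\lambda_1 x))\cdot(G(\lambda_1 x)/G(x))$ produces incompatible subsequential limits, forcing the desired conclusion \eqref{Equiv-x-infty}; a parallel argument handles $\rho = -1$ in (i). The technically most delicate step is deriving Potter's bound from the explicit representation \eqref{Repres-LL}, which is routine but requires careful manipulation of the exponential factor; once it is in hand, the remaining steps reduce to systematic applications of UCT and dominated convergence.
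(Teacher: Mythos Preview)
The paper does not supply its own proof of this theorem: it is stated in the appendix as a classical result due to Karamata and de~Haan, with references to~\cite{Karama1, Haan} (and implicitly to the monographs~\cite{Bing-Book, Geluk, Haan-Book, Seneta-Book} listed at the start of the appendix), and is followed only by an intuitive remark, not an argument. So there is no in-paper proof to compare against.

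That said, your sketch is the standard route and is sound for the generic cases $\rho>-1$ and $\rho<-1$: the substitution $t=sx$, the use of the Uniform Convergence Theorem, and the Potter-type bound extracted from the representation~\eqref{Repres-LL} are exactly the ingredients found in, e.g.,~\cite[\S1.5--1.6]{Bing-Book}. One small point: in the boundary case $\rho=-1$ of part~(iii), your ``multiplicative consistency'' argument is vaguer than it needs to be and, as written, does not quite close. A cleaner way to finish is direct: if $\limsup_{x\to\infty} L(x)/G(x)=c>0$, pick $\lambda>1$ with $c\log\lambda>1$; along a subsequence $x_n$ with $L(x_n)/G(x_n)\to c$ your own relation $G(x)-G(\lambda x)\sim L(x)\log\lambda$ gives
\[
\frac{G(x_n)-G(\lambda x_n)}{G(x_n)}\longrightarrow c\log\lambda>1,
\]
forcing $G(\lambda x_n)<0$ for large $n$, which is impossible since $G\ge 0$. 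This yields $x\L(x)/G(x)\to 0$ without invoking iterated ratios. The analogous variant handles $\rho=-1$ in part~(i).
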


\begin{rem}
\rm We now offer an intuitive interpretation of the asymptotic equalities \eqref{Equiv-0-x} and \eqref{Equiv-x-infty}. Let $ f \colon (0, \infty) \to (0, \infty) $ be a positive function that is Lebesgue integrable on every finite interval. Consider the following two cases:
\begin{enumerate}[label=(\roman*)]
    \item Suppose $ f(x) = \frac{\L(x)}{x^\alpha} $, where $ \alpha < 1 $, and $ \L $ is a slowly varying function at infinity. Then, as $ x \to \infty $, we have
    \begin{equation}
        \label{Equiv-0-xx}
        \int\limits_0^x f(t) \, dt = \int\limits_0^x \frac{\L(t)}{t^\alpha} \, dt \sim  \frac{\L(x)}{(1 - \alpha)x^{\alpha - 1}} = \frac{x f(x)}{1 - \alpha}.
    \end{equation}
    \item Suppose instead that $ f(x) = \frac{\L(x)}{x^\alpha} $, where $ \alpha > 1 $, and again $ \L $ is slowly varying at infinity. Then, as $ x \to \infty $, we find
    \begin{equation}
        \label{Equiv-xx-infty}
        \int\limits_x^\infty f(t) \, dt = \int\limits_x^\infty \frac{\L(t)}{t^\alpha} \, dt \sim  \frac{\L(x)}{(\alpha - 1)x^{\alpha - 1}} = \frac{x f(x)}{\alpha - 1}.
    \end{equation}
\end{enumerate}

In both cases, the idea is that the asymptotic behavior of the integral can be captured by treating the slowly varying part $ \L(x) $ as approximately constant and integrating the dominant power-law component. This leads to a simple but useful approximation of the integral in terms of the original function $ f(x) $.
\end{rem}

In \cite{Cadena}, the authors developed a generalized framework that extends the classical class $\mathcal{RV}_\rho$, allowing for functions whose asymptotic behavior resembles regular variation, even though the limit in \eqref{RV-rho} does not necessarily exist. More precisely, a first characterization of this new class is given below \cite[Theorem 1.1, p. 111]{Cadena}.
\begin{defi}
\label{M-rho-def}
    Consider a measurable function $ U \colon (0,\infty) \to (0,\infty) $ that remains bounded on finite intervals. We say that $U$ belongs to the class $ \mathcal{M}(\rho) $ if its logarithmic growth rate satisfies
\begin{equation}
    \label{M-rho}
    \lim_{x \to \infty} \frac{\log U (x)}{\log x} = \rho.
\end{equation}
\end{defi}
\begin{rem}
~\rm
\begin{enumerate}[label=(\roman*)]
    \item If $U$ is slowly varying at infinity, then the limit in \eqref{M-rho} holds with $ \rho = 0 $. However, the converse does not hold in general. For instance, the function $ U(x) = 2 + \sin x $ satisfies \eqref{M-rho} with $ \rho = 0 $, but it is not slowly varying due to its oscillatory behavior.
    \item It is shown in \cite[Theorem 1.2, p.~111]{Cadena} that a function $U\in \mathcal{M}(\rho) $ if and only if it admits the representation
    \begin{equation}
        \label{M-rho-Chara}
        \ell (x) = \exp \left\{ 
        \alpha(x) + \int\limits_a^x \frac{\beta(t)}{t} \, dt 
        \right\}, \quad x \geq a > 0,
    \end{equation}
    where $ \alpha(x)/\log x \to 0 $ and $ \beta(x) \to \rho $ as $ x \to \infty $.
    \item The condition \eqref{M-rho} captures functions whose asymptotic behavior mimics that of $ x^\rho $, possibly modulated by a slowly varying function.
    \item Several illustrative examples of such functions are: 
\begin{equation}\label{M-examples}
 x^{\rho},\;\,\;  x^{\rho}\, (\log x)^{\alpha},\;\,\; x^{\rho}\, \Biggl( 1 + \frac{\sin(\log x)}{\log x} \Biggr), \;\,\; x^{\rho}\,\Biggl( 1 + \frac{\sin(\log \log x)}{\log x} \Biggr).
\end{equation}
\item  Consider the function $U(x) = \exp(\sqrt{\log x}).$
Then,
$$
\frac{\log U (x)}{\log x} = \frac{\sqrt{\log x}}{\log x} \to 0,
$$
but this convergence is not of the form $ \log x^\rho $, so $U  \notin \mathcal{M}(\rho) $ for any $ \rho \in \mathbb{R} $.
\end{enumerate}
\end{rem}

One of the characterization of $\mathcal{M}$ that will be useful for our purpose can be stated as follows.
\begin{thm}{\cite[Theorem~1.3]{Cadena}}
\label{Charact-M}
Let $U$ be a positive and measurable function with support $\R_{+}$ and bounded on finite intervals. Then $U \in \mathcal{M}(\rho)$ if and only if there exist slowly varying functions $\ell_{1}$ and $\ell_{2}$ such that 
\begin{equation}
    \label{Cad-M}
    \frac{U(x)}{x^{\rho} \ell_{1}(x)} \to 0 
\quad \text{and} \quad 
\frac{U(x)}{x^{\rho} \ell_{2}(x)} \to \infty 
\quad \text{as } x \to \infty.
\end{equation}
\end{thm}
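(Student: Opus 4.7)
The plan is to prove the two implications separately; the key analytic ingredients are distinct in each direction.

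For the easy direction ($\Leftarrow$), the crucial fact is that every slowly varying function $\ell$ satisfies $\log \ell(x)/\log x \to 0$ as $x \to \infty$. This follows directly from Lemma~\ref{Asympt-L}: taking $\alpha = -\varepsilon < 0$ and $\beta = \pm 1$ yields $x^{-\varepsilon}\ell(x) \to 0$ and $x^{-\varepsilon}/\ell(x) \to 0$, so that $-\varepsilon \leq \liminf_{x\to\infty} \log \ell(x)/\log x \leq \limsup_{x\to\infty}\log \ell(x)/\log x \leq \varepsilon$; sending $\varepsilon \to 0$ gives the claim. Taking logarithms in the two assumed limits of \eqref{Cad-M}, dividing by $\log x$, and applying this fact to both $\ell_1$ and $\ell_2$ squeezes $\log U(x)/\log x$ between $\rho + o(1)$ from above and $\rho + o(1)$ from below, so $U \in \mathcal{M}(\rho)$.

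The non-trivial direction ($\Rightarrow$) is constructive. Set $V(x) := U(x)/x^\rho$, so that $V \in \mathcal{M}(0)$, reducing the task to producing slowly varying $\ell_1, \ell_2$ with $V/\ell_1 \to 0$ and $V/\ell_2 \to \infty$. Using the representation~\eqref{M-rho-Chara}, write
\[
    V(x) = \exp\!\left\{\alpha(x) + \int_a^x \frac{\beta(t)}{t}\,dt\right\},
    \qquad \alpha(x)/\log x \to 0, \quad \beta(t) \to 0.
\]
By Theorem~\ref{Repres-thm}, the exponential of the Karamata integral is already a slowly varying function, so the entire construction reduces to absorbing the sub-logarithmic remainder $\alpha(x)$ into slowly varying envelopes. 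My plan is to perturb $\beta$ by a vanishing function $\eta$, defining
\[
    \ell_1(x) := \exp\!\left\{\int_a^x \frac{\beta(t)+\eta(t)}{t}\,dt\right\}, \qquad
    \ell_2(x) := \exp\!\left\{\int_a^x \frac{\beta(t)-\eta(t)}{t}\,dt\right\},
\]
with $\eta(t) \to 0$ chosen so that $\int_a^x \eta(t)/t\,dt$ dominates $|\alpha(x)|$. Concretely, pick sequences $\delta_n \downarrow 0$ and $T_n \uparrow \infty$ satisfying $|\alpha(x)| \leq \delta_n \log x$ for $x \geq T_n$, and set $\eta(t) := 2\delta_n$ on $[T_n, T_{n+1})$.

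The main technical obstacle is the bookkeeping that verifies both (a) the piecewise constant $\eta$ tends to zero in a form compatible with Theorem~\ref{Repres-thm}, so that $\ell_1, \ell_2$ are genuinely slowly varying in the Karamata sense, and (b) $\int_a^x \eta(t)/t\,dt - |\alpha(x)| \to +\infty$ uniformly in $x$, not merely along the subsequence $\{T_n\}$. A careful choice of the growth ratios $T_{n+1}/T_n$—for instance $T_{n+1} = T_n^{c}$ with a fixed $c > 2$—should deliver both properties simultaneously. If this construction proves too delicate, a robust alternative is to work with candidates of the form $\ell(x) = x^{\delta(x)}$ where $\delta(x) \to 0$ gently (e.g.\ $\delta(x) = 1/\log\log x$), exploiting the standard fact that such functions are slowly varying whenever the increments $[\delta(\lambda x)-\delta(\lambda)]\log \lambda$ vanish as $\lambda \to \infty$ for each fixed $x > 0$, and then tune $\delta$ so that the required sandwich estimates hold.
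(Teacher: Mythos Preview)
The paper does not prove this theorem at all; it is quoted verbatim from \cite[Theorem~1.3]{Cadena} and used as a black box in Section~\ref{Unforced}. So there is no ``paper's proof'' to compare against, and your attempt is an independent reconstruction.

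Your ($\Leftarrow$) direction is clean and correct: the fact that $\log\ell(x)/\log x\to 0$ for every slowly varying $\ell$ (which you derive from Lemma~\ref{Asympt-L}) is exactly what is needed, and the squeeze argument is sound.

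For the ($\Rightarrow$) direction your overall strategy is right, and leaning on the representation \eqref{M-rho-Chara} is legitimate since that is a separate result (Theorem~1.2 in \cite{Cadena}). The construction via a perturbation $\eta$ of the Karamata integrand is the natural move. However, the specific parameter choice you float---$\eta\equiv 2\delta_n$ on $[T_n,T_{n+1})$ with $T_{n+1}=T_n^{c}$, $c>2$---does not quite close by itself. The quantity you must drive to $+\infty$ is
\[
a_n \;:=\; \sum_{k=1}^{n-1} 2\delta_k(c-1)\log T_k \;-\; \delta_n\log T_n,
\]
and a short recursion shows $a_{n+1}-a_n \ge (c-1)\,\delta_n\log T_n$, so you need $\sum_n \delta_n\log T_n=\infty$. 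With $\log T_n=c^{\,n-1}\log T_1$ this fails whenever $\delta_n$ (which is dictated by $\alpha$) happens to decay faster than $c^{-n}$. The easy repair is to \emph{thicken} the sequence: replace $\delta_n$ by $\tilde\delta_n:=\max(\delta_n,1/n)$, which still satisfies $|\alpha(x)|\le\tilde\delta_n\log x$ for $x\ge T_n$, still tends to $0$ (so $\ell_1,\ell_2$ remain slowly varying via Theorem~\ref{Repres-thm}), and now $\sum_n\tilde\delta_n c^{\,n-1}=\infty$ trivially. With that tweak your construction goes through; your fallback $\ell(x)=x^{\delta(x)}$ would also work but requires its own verification that $[\delta(\lambda x)-\delta(x)]\log x\to 0$, which is of comparable delicacy.
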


A key consequence of Theorem \ref{UC-Thm}, which will be utilized in deriving the Fujita exponent, is the following asymptotic estimate for integrals involving slowly varying functions.
\begin{prop}
    \label{Asymp-Int}
    Let $\h : (0, \infty) \to (0, \infty) $ be a continuous function satisfying \eqref{h-form}.
    Let $ \beta\in\R$, and define
    \begin{equation}
        \label{F-lambda}
        F(R) = \int\limits\limits_a^b \left(\h(R \tau) \right)^{\beta} \, d\tau
    \end{equation}
    where $ R > 0 $ and $ 0 <a < b < \infty $. Then, as $ R \to \infty $,
\begin{equation}
            \label{Equiv-1}
            F(R) \sim \left( \int\limits_a^b \tau^{\beta\gamma} \, d\tau \right) R^{\beta\gamma} \left(\L(R)\right)^{\beta}.
        \end{equation}
\end{prop}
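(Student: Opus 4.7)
The plan is to factor out the power part of $\h$ and reduce the statement to a uniform convergence argument for slowly varying functions, which is precisely what Theorem~\ref{UC-Thm} provides.

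First, I would write $\h(R\tau) = (R\tau)^{\gamma}\,\ell(R\tau)$ using the prescribed form \eqref{h-form}. Raising to the power $\beta$ and pulling the factor $R^{\beta\gamma}$ outside the integral gives
\begin{equation}
F(R) \;=\; R^{\beta\gamma}\int_a^b \tau^{\beta\gamma}\bigl(\ell(R\tau)\bigr)^{\beta}\,d\tau.
\end{equation}
Dividing by the claimed asymptotic profile then leads to the cleaner identity
\begin{equation}
\frac{F(R)}{R^{\beta\gamma}\bigl(\ell(R)\bigr)^{\beta}}
\;=\; \int_a^b \tau^{\beta\gamma}\left(\frac{\ell(R\tau)}{\ell(R)}\right)^{\!\beta}\,d\tau,
\end{equation}
so the whole problem reduces to showing that the right-hand side tends to $\int_a^b \tau^{\beta\gamma}\,d\tau$ as $R\to\infty$.

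Next, I would invoke the Uniform Convergence Theorem (Theorem~\ref{UC-Thm}) applied to the slowly varying function $\ell$ (i.e.\ the case $\rho=0$ of $\mathcal{RV}_\rho$): since $0<a<b<\infty$, the ratio $\ell(R\tau)/\ell(R)$ converges to $1$ uniformly in $\tau\in[a,b]$. By continuity of the map $y\mapsto y^{\beta}$ on a neighborhood of $1$, the same uniform convergence holds for $(\ell(R\tau)/\ell(R))^{\beta}\to 1$. Consequently the integrand converges uniformly to $\tau^{\beta\gamma}$ on $[a,b]$, which is a compact interval bounded away from the origin, so $\tau^{\beta\gamma}$ is bounded and integrable there regardless of the sign of $\beta\gamma$. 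Interchanging limit and integral (justified by uniform convergence on a finite-length interval) yields \eqref{Equiv-1}.

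I do not anticipate a serious obstacle: the whole content of the proposition is essentially an application of the Uniform Convergence Theorem. The only small point worth stressing is that the assumption $a>0$ is crucial, since UCT only delivers uniform convergence on compact subsets of $(0,\infty)$; if one allowed $a=0$ the ratio $\ell(R\tau)/\ell(R)$ would not in general converge uniformly near $\tau=0$. Everything else is a routine factorization.
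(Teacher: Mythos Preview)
Your proposal is correct and follows essentially the same route as the paper: factor $\h(R\tau)=(R\tau)^{\gamma}\ell(R\tau)$, pull out $R^{\beta\gamma}$, and then invoke the Uniform Convergence Theorem for the slowly varying function $\ell$ on the compact interval $[a,b]$ to pass to the limit inside the integral. Your write-up is in fact slightly more explicit than the paper's in justifying why $(\ell(R\tau)/\ell(R))^{\beta}\to 1$ uniformly (via continuity of $y\mapsto y^{\beta}$ near $1$) and in stressing the role of the hypothesis $a>0$.
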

\begin{proof}
  From \eqref{h-form}, we immediately obtain
$$
F(R) = R^{\beta \gamma} \int\limits_a^b \tau^{\beta \gamma}\, \L(R \tau)^{\beta}\, d\tau.
$$
Since $\L$ is slowly varying at infinity, Theorem~\ref{UC-Thm} implies that $\L(R \tau)^{\beta} \sim \L(R)^{\beta}$ uniformly for $\tau \in [a,b]$ as $R \to \infty$. Consequently,
$$
F(R) \sim R^{\beta \gamma} \left(\L(R)\right)^{\beta} \int\limits_a^b \tau^{\beta \gamma}  d\tau,
$$
which is the desired relation \eqref{Equiv-1}. The integral is finite since $0 < a < b < \infty$.
\end{proof}


\hrule 

\vspace{0.3cm}
\noindent{\bf\large Declarations.} {\em On behalf of all authors, the corresponding author states that there is no conflict of interest. No data-sets were generated or analyzed during the current study.}
	\vspace{0.3cm}
 \hrule

\end{document}